\documentclass[11pt, reqno]{amsart}
\usepackage{enumerate}
  \usepackage{geometry }
\geometry{left=3cm,right=3cm,top=3.3cm,bottom=3.3cm}
 \usepackage{amsmath}
 \usepackage{amsfonts}
 \usepackage{amssymb}
  \usepackage{enumerate}
 
\usepackage{amssymb,amsmath,amscd}
\usepackage{amsthm}
 \usepackage[all]{xy}
 
\usepackage{color}
 \usepackage{mathtools}
 
\newcommand*\diff{\mathop{}\!\mathrm{d}}

 \numberwithin{equation}{section}

   \newtheorem*{thmalph}{Theorem}

\newtheorem{theorem}{Theorem}[section]
 
\newtheorem*{theorem*}{Theorem }
\newtheorem{proposition}[theorem]{Proposition}

\newtheorem{corollary}[theorem]{Corollary}

\newtheorem{lemma}[theorem]{Lemma}

\DeclareMathOperator{\tr}{tr}

   \DeclareMathOperator{\ib}{\boldsymbol  \iota}
   


  
 \usepackage{mathtools}
\usepackage[bbgreekl]{mathbbol}
\usepackage{amsfonts}

\DeclareSymbolFontAlphabet{\mathbb}{AMSb}
 \usepackage[
 colorlinks=true,  citecolor=cyan,   
 citebordercolor={0 .255 .255} , 
  linkbordercolor={0 .255 .255},  
  linkcolor=cyan
 ]{hyperref}

   \usepackage{cite}

 \def\equationautorefname~#1\null{(#1)\null}

\usepackage{multirow}
  \usepackage{graphicx}
 \usepackage{caption}
 \usepackage{booktabs}
 \usepackage{threeparttable}
\newcommand{\C}{\mathbb C}

\renewcommand{\det}{\mathbf{det}}
\renewcommand{\tr}{\mathbf{tr}}

      \DeclareMathOperator{\SO}{SO}

\makeatletter
\newcommand{\extp}{\@ifnextchar^\@extp{\@extp^{\,}}}
\def\@extp^#1{\mathop{\bigwedge\nolimits^{\!#1}}}
\makeatother

      \usepackage[normalem]{ulem}
      \usepackage{color}
        \usepackage{xcolor}
      \usepackage{cancel}
      \usepackage{slashed}

\newcommand\blfootnote[1]{%
  \begingroup
  \renewcommand\thefootnote{}\footnote{#1}%
  \addtocounter{footnote}{-1}%
  \endgroup
}

\begin{document}

\title[Poisson transform for differential forms]
{On Poisson transforms  of differential forms on real hyperbolic spaces}
\author{Salem Ben Said}
\address{Salem Ben Said : 
Mathematical Sciences Department, College of Science, United Arab Emirates University, Al Ain, UAE
}
 
\email{salem.bensaid@uaeu.ac.ae}
\thanks{}

\author{Abdelhamid Boussejra}
\address{Abdelhamid Boussejra : Department of Mathematics, Faculty of Sciences, University Ibn Tofail, Kenitra, Morocco 
}
\curraddr{}
\email{boussejra.abdelhamid@uit.ac.ma}
\thanks{}

\author{Khalid Koufany}
\address{Khalid Koufany : Université de Lorraine, CNRS, IECL, F-54000 Nancy, France}

\curraddr{}
\email{khalid.koufany@univ-lorraine.fr}
\thanks{}
\date{\today}
 
 \maketitle

\begin{abstract} This paper is concerned with the Poisson transform of differential forms on the hyperbolic space $H^n(\mathbb R)$.
Consider an integer $p$ such that $1\leqslant p\leqslant n$   and let $q$ be either $p-1$ or $p$. 
For $1<r<\infty$, we prove that the Poisson transform is a topological isomorphism from the space of $L^r$-differential $q$-forms on the boundary $\partial  H^n(\mathbb R)$  onto a Hardy-type subspace of $p$-eigenforms of the Hodge-de Rham Laplacian on $H^n(\mathbb R)$.  

\end{abstract}
\blfootnote{\emph{Keywords : \rm  Real hyperbolic space, Poisson transform, Eigenforms ,  Differential forms.}} 
\blfootnote{\emph{2010 AMS Classification : \rm 58A10, 43A85, 53C35, 22E30}} 
\tableofcontents

\section{Introduction}
Let  $G/K$ be a Riemannian symmetric space of non-compact type and let $G/P$ be its Furstenberg boundary. In \cite{Helgason-70}, S. Helgason claimed that all eigenfunctions of $G$-invariant differential operators on $G/K$ are Poisson transforms of hyperfunctions on $G/P.$  This conjecture was initially proved in \cite{Helgason-70}  for  the case where  $G/K$ has rank one, and was later established in its entirely by Kashiwara {\it et al.} \cite{KKMOOT}.  Since then, this conjecture  has received  significant attention across various settings (see, e.g., \cite{BK, 1, 2, 3, 4, 5, 6, 7, 8, 9, Schmid,  Shimeno}).

This problem has been extended to  include Poisson transforms for homogeneous vector bundles over  $G/K$, (see, e.g., \cite{Gaillard, Gaillard2,   Juhl, olbrich, van, Yan}).   
  In this paper, we will focus on studying the Poisson transform for   the bundle of differential forms on the real hyperbolic space $H^n(\mathbb R)$.

To elaborate further,  we will suppose $G=\SO_{\rm o}(n,1)$, $K= \SO(n)$ and $H^n(\mathbb R)=G/K$. The boundary is realized as $\partial H^n(\mathbb R)=G/P$ where  $P=MAN$  with $M=\SO(n-1)$, $A\simeq \mathbb R$ and $N\simeq \mathbb R^{n-1}$.

 Let $0\leqslant p\leqslant n$ be an integer, and consider the space  $C^\infty(\extp^p H^n(\mathbb R ))$ of smooth differential $p$-forms on the hyperbolic space, {\it i.e.} smooth sections of the bundle $\extp^p H^n(\mathbb R):=\extp^p T^*_{\mathbb C}H^n(\mathbb R ))$.

   Let  $\mathbb D\left(\extp^pH^n(\mathbb R\right)$  be the algebra of letf-invariant differential operators, acting on  the space  $C^\infty(\extp^p H^n(\mathbb R ))$.  To keep this introduction simple for the reader, we suppose $p$ generic, i.e. $1 \leqslant p<\frac{n-1}{2}$ (but we get similar results  for general $p$).  We exclude the case 
  $p=0$, which corresponds to the well-known case of functions.  
   A result of Gaillard \cite{Gaillard3} states that  in this case $\mathbb D\left(\extp^p(H^n(\mathbb R)\right)$  is a commutative algebra generated by $dd^*$ and $d^*d$, where $d$ is 
 exterior differentiation operator and $d^*$ is the  co-differentiation operator. Let us further denote $\Delta=dd^*+d^*d$ the Hodge de-Rham operator.
 Therefore, any character $\chi$ of $\mathbb D\left(\extp^pH^n(\mathbb R)\right)$  can be only of type    $\chi_{p,\lambda}(dd^*) =\lambda^2 + (\rho-p)^2$ or  $\chi_{p-1,\lambda}(d^*d) =\lambda^2 + (\rho-p+1)^2$, where $\rho=\frac{n-1}{2}$.  Let 
 \begin{equation}\label{1}
 	\mathcal E_{q,\lambda}^p=\left\{ F\in C^\infty\left(\extp^pH^n(\mathbb R)\right) \; |\;  DF=\chi_{q,\lambda}(D) F,\;\;  \forall D\in\mathbb D\left(\extp^p H^n(\mathbb R)\right)\right\}, \; q=p, p-1
 \end{equation}

Note that $\mathcal{E}_{p, \lambda}^p$ (resp. $\mathcal{E}_{p-1, \lambda}^p$ ) represents the space of coclosed (resp. closed) eigenforms of $\Delta$ associated with the eigenvalue $\lambda^2+(\rho-p)^2$ (resp. $\lambda^2+(\rho-p+1)^2$ ). Gaillard \cite{Gaillard, Gaillard2} proved that for regular values of $\lambda$, there exist $G$-isomorphisms,
$$
C^{-\omega}_{q} \xrightarrow {\sim} \mathcal E_{q,\lambda}^p 
$$
realized by Poisson transformations $\mathcal P_{q,\lambda}^p$, where $C^{-\omega}_q=C^{-\omega}\left(\extp^q T_{\mathbb C}^*\partial H^n(\mathbb R)\right)$ denotes the space of hyperfunction vectors of an induced representation from $P=M A N$ to $G$, constructed using the natural representation of $M$ on $\extp^q \mathbb{C}^{n-1}$, a suitable character of $A$, and the trivial representation of $N$ (see Theorem \ref{gaillard}, Corollary \ref{bijective1} and Corollary \ref{cor-olbrich}).

The purpose of this paper is to study the Poisson transforms on the space $L^r\left(\extp^q T_{\mathbb C}^* \partial H^n(\mathbb{R})\right)$, for $1<r<\infty$, and to precisely characterize its  image. Let us now provide further details.

  Let  $\tau_{p}$ be the $p$-th exterior power   of  the coadjoint representation  of $K$ acting on  
$V_{\tau_p}:= \bigwedge^{p} (\mathfrak g_{\mathbb C}/ \mathfrak k_{\mathbb C})^*\simeq\bigwedge^{p} \mathbb{C}^{n},$ with the associated  vector bundle  $  G\times_K V_{\tau_p}$.  
Since $\extp^p H^n(\mathbb R)$ is canonically isomorphic to   $G\times_K V_{\tau_p}$, we identify $C^\infty (\extp^p H^n(\mathbb R)) $ with the space 
  $C^{\infty}(G/K; {\tau_p})$ of  smooth $\tau_p$-equivariant  functions $f : G\to V_{\tau_p}$.

 With our assumption, $p$ is generic ($1 \leqslant p<\frac{n-1}{2}$),  the representation $\tau_p$ is irreducible and    the restriction of  $\tau_p$ to $M$  decomposes as $ {\tau_{p}}_{|M}=\sigma_{p-1}\oplus \sigma_p,
 $ where the representation  $\sigma_q$, for $q=p-1,p$,  is the $q$-th exterior power   of  the coadjoint representation  of  $M$ on 
 $V_{\sigma_q}\simeq  \bigwedge^{q} \mathbb{C}^{n-1}.$  Denote by $\widehat{M}(\tau_p)$ the set of irreducible components of ${\tau_p}_{|M}$. So in this case (generic $p$), $\widehat{M}(\tau_p)=\{\sigma_{p-1},\sigma_p\}$ (see \eqref{MMM} for the general case, $1\leqslant p\leqslant n$).

 Denote by   $\mathfrak{a}$   the Lie algebra of $A.$  
 For $\lambda\in\mathfrak a^\ast_\mathbb{C}$ and $\sigma\in  \widehat{M}(\tau)$, we consider the representation $ \varpi_{\sigma, \lambda}$ of $P=MAN$ acting  on $V_{\sigma}$ by 
 $ \varpi_{\sigma, \lambda}(ma_tn)=e^{(\rho-i\lambda)t} \sigma(m)$ where $m\in M$, $a_t\in A$, $n\in N$. Let  $F_{\varpi_{\sigma,\lambda}}$ be the associated homogeneous  vector bundle over $G/P,$  and let  
 $C^{-\omega}(F_{\varpi_{\sigma,\lambda}}) $ be the space of its hyperfunctional sections, which will be identified with the space  $C^{-\omega}(G/P;\varpi_{\sigma,\lambda})$  of    hyperfunctions $f: G \to V_{\sigma}$  satisfying 
$
f(gma_tn)={\rm e}^{(i\lambda-\rho)t}\sigma(m^{-1})f(g)$ for every $g\in G$, $m\in M$, $n\in N$ and  $a_t\in A$. 
 Let $C^{-\omega}({T^*_\mathbb{C}}\extp^p \partial H^n(\mathbb R))$ be the space of $p$-forms with hyperfunction coefficients on $\partial H^n(\mathbb R^n)$. Then we have
$ C^{-\omega}({T^*_\mathbb{C}}\extp^p \partial H^n(\mathbb R)) \simeq 
C^{-\omega}(K/M, \sigma)$, the space of hyperfunctions $F: K\to V_\sigma$ such that $F(km)=\sigma(m)^{-1}F(k)$ ($m\in M, k\in K$), see background for more details.

Now, using the fact that, as a $K$ module,    $C^{-\omega}(G/P;\varpi_{\sigma, \lambda})$ is   isomorphic to the space $C^{-\omega}\left(K/M;{\sigma}\right)$,  we define (in the compact picture) the 
Poisson transform of any  hyperform $f\in  C^{-\omega}\left(K/M; {\sigma}\right)$  by \begin{align*}
\mathcal P^{\tau}_{\sigma,\lambda}f(g)=\sqrt{\frac{\dim \tau}{\dim\sigma}}  \int_K  {\rm e}^{-(i\lambda+\rho)H(g^{-1}k)}\tau(\kappa(g^{-1}k)) \ib_\sigma^\tau(f(k)){\rm d}k,\;\; g\in G,
\end{align*}
where $\ib_\sigma^\tau$ is  the natural   embedding of $V_{\sigma}$ into $V_{\tau}$.

Taking the above identifications into account, we regard the algebra $\mathbb{D}\left(\extp^p H^n(\mathbb{R})\right)$ as the algebra $\mathbb{D}\left(G / K; \tau\right)$ of $G$-invariant differential operators acting on the space $C^{\infty}\left(G / K; \tau\right)$. Furthermore, we continue to treat $d d^*$ and $d^* d$ as the fundamental generators (for generic $p$).

The previously mentioned eigenspaces  \eqref{1} are now  (for $p$ generic)
 $$\mathcal{E}_{\sigma_p,\lambda}(G/K; {\tau_p})=\{F\in C^\infty(G/K; \tau_p) : \; \Delta F=(\lambda^2+(\rho-p)^2)F \,\text{ and }\;  d^* F=0\} ,$$
and 
$$\mathcal{E}_{\sigma_{p-1},\lambda}(G/K; {\tau_p})=\{F\in C^\infty(G/K; \tau_p) : \; \Delta F=(\lambda^2+(\rho-p+1)^2)F \,\text{ and }\;  dF=0\} .$$

For a general integer $1 \leqslant p \leqslant n$, the Hodge $\star$-operator implies that $\tau_p$ and $\tau_{n-p}$ are equivalent, allowing us to restrict our focus to $1 \leqslant p \leqslant \frac{n}{2}$. It is well known that $\tau_p$ is irreducible if and only if $p \neq \frac{n}{2}$. Moreover, when $p=\frac{n}{2}$, $\tau_{\frac{n}{2}}$ decomposes into two irreducible subrepresentations, $\tau_{\frac{n}{2}}=$ $\tau_{\frac{n}{2}}^{+} \oplus \tau_{\frac{n}{2}}^{-}$, where $\tau_{\frac{n}{2}}^{ \pm}$ correspond to the eigenspaces of the Hodge $\star$-operator and its restriction to $M$ is given by  ${\tau_{\frac{n}{2}}^{ \pm}}_{|M}=\sigma_{\frac{n}{2}}$. Furthermore, for $p=\frac{n-1}{2}$, we have ${\tau_{\frac{n-1}{2}}}_{|M}=\sigma_{\frac{n-1}{2}-1} \oplus \sigma_{\frac{n-1}{2}}^{+} \oplus \sigma_{\frac{n-1}{2}}^{-}$. Based on this branching law, for pairs $(\tau, \sigma)=(\tau_p^{( \pm)}, \sigma_q^{( \pm)})$ with $q=p$ or $p-1$, we define the Poisson transforms $\mathcal{P}_{\sigma, \lambda}^\tau$ and the eigenspaces $\mathcal{E}_{\sigma, \lambda}\left(G / K; \tau\right)$, in analogy with the generic case. For a full description, refer to Section 3.

For $1< r< \infty,$ let   $L^r\left(K/M;{\sigma}\right) $ be the subspace of $L^r$-hyperforms in $ C^{-\omega}\left(K/M; {\sigma}\right)$ and recall that     the central goal of this paper is to characterize the image $\mathcal{P}_{\sigma, \lambda}^\tau\left(L^r\left(K/M; {\sigma}\right)\right)$.   To accomplish this,   we introduce  the space     $\mathcal{E}^{r}_{\sigma,\lambda}(G/K; {\tau})$    of all $F$ in  $ \mathcal E_{\sigma,\lambda}(G/K; {\tau}) $ satisfying 
\begin{equation*} 
\Vert F\Vert_{{r,\lambda}}:=\sup_{t>0}{\rm e}^{ (\rho-\Re(i\lambda))t}\left(\int_K \Vert F(ka_t) \Vert_{V_{\tau}}^r {\rm d}k\right)^\frac{1}{r} <\infty,
\end{equation*}
where $\diff k$ is the normalized Haar measure of $K$.  The main result  is: 

\begin{thmalph}[See Theorem \ref{cas-Lr}]\label{thmA}
Let $\tau =\tau_1,\cdots,\tau_{\frac{n-1}{2}}, \tau^\pm_{\frac{n}{2}}$ and $\sigma=\sigma_q\in  \widehat{M}(\tau)$ accordingly.  Consider 
  $\lambda\in \mathbb C$ such that 
  \begin{equation*} 
\begin{cases}
\Re(i\lambda)>0,&\text{ if $q=p$}\\ 
 \Re(i\lambda)>0, \, \text{ and } \; i\lambda\not=\rho-p+1& \text{ if $q=p-1$}
 \end{cases}
 \end{equation*}
 where $\rho=\frac{n-1}{2}.$
 Then the Poisson transform $\mathcal{P}_{\sigma,\lambda}^{\tau}$ is  a topological isomorphism
from the space $L^r\left(K/M; {\sigma}\right)$ onto the space $\mathcal{E}^r_{\sigma,\lambda}(G/K; {\tau})$.   Moreover,  for every $f\in L^r\left(K/M;{\sigma}\right)$,  we have 
\begin{equation*}\label{esti-F}
  | c_\sigma(\lambda,\tau)| \;\| f\|_{L^r\left(K/M; {\sigma}\right)}\leq \sqrt{\frac{\dim \sigma}{\dim \tau}} \;
 \|\mathcal{P}_{\sigma,\lambda}^{\tau} f\|_{{r,\lambda}} \leq    \gamma_\lambda \| f\|_{ L^r\left(K/M; {\sigma}\right)},
\end{equation*}
 where $\gamma_\lambda$  is a positive constant and  $c_\sigma(\lambda,\tau)$  denotes the scalar component of the vector-valued Harish-Chandra $c$-function $\mathbf c(\lambda,\tau).$  Refer to 
Proposition \ref{c-explicit} for its explicit expression.
\end{thmalph}

As an immediate consequence of the main theorem, we obtain a characterization of coclosed harmonic $p$-forms via the Poisson transform, corresponding to the following description:  generic $p$, $q=p$ and $i \lambda=\rho-p$ (see Corollary \ref{corr-harm}).
Moreover, if we additionally assume $p=0$, we recover the well-known result that the Poisson transform is an isometric isomorphism from $L^r\left(\extp^p \partial {H}^n(\mathbb R)\right)$ onto a Hardy-harmonic space on ${H}^n(\mathbb R)$ (see \cite{stoll}).

The paper is organized as follows: Section \ref{sec2} provides the necessary background, while Section \ref{sec3} introduces the Poisson transform on the space of differential forms on the boundary $\partial H^n(\mathbb{R})$. In Section \ref{sec4}, we establish a Fatou-type theorem for Poisson integrals, 
a crucial tool in obtaining the explicit formulas for the scalar component
  $c_\sigma(\lambda, \tau)$ that appear in the main result. Section \ref{sec5} makes fundamental use of this Fatou-type theorem to establish the main result for $r=2$, and also provides an $L^2$-inversion formula for the Poisson transform. Finally, in Section \ref{sec6}, we synthesize all preceding   results to prove the main theorem for all $1<r<\infty$.

 \section{Background}\label{sec2}

\subsection{The real hyperbolic space}    
    Let $H^{n}(\mathbb{R})$ be the $n$-dimensional real hyperbolic space ($n \geqslant 2$)
  viewed as the rank one symmetric space of the noncompact type $G/K$ where  $G=\SO_0(n,1)$ and $K=\SO(n)$.

 Let  $\mathbf {\mathfrak g}\simeq\mathfrak{so}(n,1)$ and $\mathbf {\mathfrak k}\simeq\mathfrak{so}(n)$ be the Lie algebras of $G$ and $K$ respectively  and write $\mathfrak g=\mathfrak k\oplus \mathfrak p$ for the Cartan decomposition of $\mathfrak g$.
 The tangent space $T_o(G / K) \simeq   \mathfrak{p}$ of $G / K=H^n(\mathbb{R})$ at the origin $o=e K$ will be identified with the vector space $\mathbb{R}^n$.

  There exists an element $H_0\in\mathfrak p$ such that 
$\mathfrak{a}=\mathbb{R} H_0$ is a Cartan subspace in $\mathfrak{p}$. Let  $A=\exp\mathfrak a=\{a_t= e^{tH_0},\; t\in\mathbb R\}$ be the corresponding analytic Lie subgroup of $G$.
 We define  $\alpha \in \mathfrak{a}^*$ by $\alpha\left( H_0\right)=1$.  
Then the positive restricted root subsystem is $\Sigma^{+}(\mathfrak{g}, \mathfrak{a})=\{\alpha\}$.    
 We will identify $\mathfrak a_{\mathbb C}^*$ and $\mathbb C$ via the map $\lambda \alpha \mapsto \lambda$. Then the half-sum of positive roots is $\rho=(n-1)\alpha/2 = (n-1)/2$.

Let $\mathfrak{n}=\mathfrak{g}_\alpha\simeq \mathbb R^{n-1}$ be the   positive root subspace and $N=\exp(\mathfrak n)$ the corresponding analytic subgroup  of $G$.   The groupe $G$ has an Iwasawa decomposition $G=KAN$. Thus, each $g\in G$ can be uniquely written as 
$
g=k(g) e^{H(g)} n(g)$, where $k(g)\in K$, $H(g)\in \mathfrak{a}$ and   $n(g)\in N
$. 
 
Let $P=MAN$ be the standard minimal parabolic subgroup of $G$, where $M=\SO(n-1)$ is the centralizer  of $A$ in $K.$
  Then the boundary $\partial H^n(\mathbb R)$ is   realized as the flat space $\partial H^n(\mathbb R)=G/P= K/M$.

 \subsection{Differential forms on $H^n(\mathbb R)$ and $\partial H^n(\mathbb R)$} 
  Let $\langle \cdot,\cdot\rangle$ be the standard Euclidean scalar product in $\mathbb R^n$. Let 
 $(e_1,e_2,\ldots, e_n)$ be the standard orthonormal  basis of  $\mathbb{R}^n$   and denote $(e_1^*,e_2^*,\ldots, e_n^*)$ its dual basis. 
 
 For an integer $p$ such that $0\leqslant p\leqslant n$,   let $ \bigwedge^p (\mathbb C^{n})^*= \bigwedge^p(\mathbb R^{n})^*\otimes\mathbb{C}$   be the space of complex-valued  alternating multilinear $p$-forms on $\mathbb R^n.$ 
  
   For the reader's convenience and to keep the notations simple, we will identify $(\mathbb C^n)^*$ with $\mathbb C^n$ and $ \bigwedge^p (\mathbb C^{n})^*$ with $ \bigwedge^p \mathbb C^{n}$.
 
 We define an inner  product $\langle \cdot, \cdot\rangle_{\bigwedge^p\mathbb C^n}$ on $\bigwedge^p\mathbb C^n$ as an extension of the one   on 
 $\mathbb C^n$   by setting
 \begin{equation}\label{ps-p-vectors}
\langle v_1\wedge\cdots \wedge v_p,w_1\wedge\cdots\wedge w_p\rangle_{\bigwedge^p\mathbb C^n}=\det (\langle v_i,w_j\rangle)_{i,j}.
 \end{equation}
It is easy to show that the basis of $\bigwedge^p\mathbb C^n$ consisting of the $p$-vectors  $e_I:=e_{i_1}\wedge \cdots \wedge e_{i_p}$, where $ I=\{i_1,\cdots,i_p\}$, with $1\leqslant i_1<\cdots <i_p\leqslant n$,  is an orthonormal basis of 
 $\bigwedge^p\mathbb C^n $ with respect to the inner product \autoref{ps-p-vectors}.

 In this section, we  choose and fix   an integer  $p$, such that $0\leqslant p\leqslant n$. Let  $\extp^p H^n(\mathbb R):= \extp^p T^*_{\mathbb C}H^n(\mathbb R)$ be the $p$-th exterior power of the complexified cotangent bundle of $H^n(\mathbb R)$. 
A differential $p$-form on $H^n(\mathbb R)$ is a section on $\extp^p H^n(\mathbb R)$.

  Let $\tau_{p}$ be the standard representation of $K$ on $V_{\tau_p}=\extp^{p} \mathbb{C}^n$. Notice that $\tau_p$ is equivalent to the $p$-th exterior power   of  the coadjoint representation $Ad^\ast$ of $K$ on 
 $\mathfrak{p}_{{\mathbb C}}^\ast$. Let $G\times_K V_{\tau_p}$ be the the $G$-homogeneous vector bundle   associated to $\tau_p$. Since $T_{eK} H^n(\mathbb{R})\simeq \mathfrak{p}\simeq \mathbb{R}^n$, then as $G$-homogeneous vector bundles we have   $\extp^p H^n(\mathbb{R})\simeq G\times_K V_{\tau_p}$. As usual we shall identify the space  $C^\infty( \extp^p H^n(\mathbb{R}))$ of smooth differential  $p$-forms on  $H^n(\mathbb{R})$ with the space $C^\infty(G/K;\tau_p)$  of  smooth functions   $F:G\to V_{\tau_p}$ which are right $K$-covariant, i.e., 
\begin{equation}\label{t-equiv16}
   	F(g k)={\tau_p}\left(k^{-1}\right) F(g) \quad \text{for all $g\in G$,  $k\in K$}.
   \end{equation}

 It is known that the representation $\tau_p$ is irreducible unless $n$ is even and $p=\frac{n}{2}$ in which case it decomposes as 
$\tau_{{n}/{2}}=\tau_{{n}/{2}}^+\oplus \tau_{{n}/{2}}^-$ with the corresponding  decomposition of the representation space $\extp^{\frac{n}{2}} \mathbb{C}^n=\extp_{+}^{\frac{n}{2}} \mathbb{C}^n \oplus \extp_{-}^{\frac{n}{2}} \mathbb{C}^n$, where 
$
\extp^{\frac{n}{2}}_{\pm} \mathbb{C}^n=\{\alpha\in  \extp^{\frac{n}{2}} \mathbb{C}^n; \star \alpha=\mu_{\pm}\alpha\}
$. In this case we have 
$
\extp^{\frac{n}{2}} H^n(\mathbb{R})=\extp_+^{\frac{n}{2}} H^n(\mathbb{R})\oplus \extp_-^{\frac{n}{2}} H^n(\mathbb{R})
$
with $\extp_\pm^p H^n(\mathbb{R})=G\times_K \extp_{\pm}^{\frac{n}{2}} \mathbb{C}^n$. 
Here $\star$ is  the Hodge operator and $\mu_\pm=\pm 1$  if $\frac{n}{2}$ is even and $\mu_\pm=\pm i$ if $\frac{n}{2}$  is odd. Notice that the Hodge operator induces the equivalence $\tau_p\sim \tau_{n-p}$, hence, hereafter we shall restrict our discussion to $0\leqslant p\leqslant \frac{n}{2}$.  

To distinguish between   representations of $K$ and   representations of $M$, we will 
  use the Greek letter   $\sigma$ to denote  those representations of    $M$. This notation will help clearly differentiate the representations associated with these two groups throughout our discussion, ensuring that the analysis remains precise and unambiguous.
  
For $0\leqslant q\leqslant n-1$, let  $\sigma_q$ be the standard representation of $M$ on $V_{\sigma_q}=\extp^q(\mathbb Ce_2\oplus\cdots\oplus\mathbb C e_n)=\extp^q\mathbb C^{n-1}$, where $(e_j)_{j=1}^n$ is the natural basis of $\mathbb C^n$. Then
 the  branching rules for $(K,M)=(\SO(n),\SO(n-1))$ is given as follows (see e.g. \cite{Pedon, BS, IT}) :
 \begin{lemma}\label{lema2.1}
 	\begin{enumerate}
\item[$(1)$] if $p< \frac{n-1}{2}$, then ${\tau_p}_{|M}=\sigma_{p-1}\oplus \sigma_{p}$, with
   \begin{equation*} 
\textstyle  \bigwedge^p\mathbb{C}^n
=e_1\wedge \textstyle\bigwedge^{p-1}\mathbb{C}^{n-1}\oplus \bigwedge^{p}\mathbb{C}^{n-1}
\simeq  \bigwedge^{p-1}\mathbb{C}^{n-1}\oplus \bigwedge^{p}\mathbb{C}^{n-1};
  \end{equation*}
\item[$(2)$] if $p=\frac{n-1}{2}$, then ${\tau_{p}}_{|M}=\sigma_{p-1}\oplus\sigma_{p}^+\oplus \sigma_{p}^-$, with
\begin{equation*} 
 \textstyle \bigwedge^p\mathbb{C}^n
 =e_1 \wedge \bigwedge^{p-1} \mathbb{C}^{n-1} \oplus \bigwedge_{+}^p \mathbb{C}^{n-1} \oplus \bigwedge_{-}^p \mathbb{C}^{n-1}
 \simeq \bigwedge^{p-1} \mathbb{C}^{n-1} \oplus \bigwedge_{+}^p \mathbb{C}^{n-1} \oplus \bigwedge_{-}^p \mathbb{C}^{n-1};
  \end{equation*} 
\item[$(3)$] if $p=\frac{n}{2}$, then ${\tau_{\frac{n}{2}} }_{\big|M}={\tau_{\frac{n}{2}}^{+}}_{\big|M} \tilde{\oplus} {{\tau_{\frac{n}{2}}^{-}}}_{\big|M}=\sigma_{\frac{n}{2}-1} \tilde{\oplus} \sigma_{\frac{n}{2}}$, with
$$ \textstyle 
\bigwedge^{\frac{n}{2}} \mathbb{C}^n
=e_1 \wedge  \bigwedge^{\frac{n}{2}-1} \mathbb{C}^{n-1}  \tilde{\oplus} \bigwedge^{\frac{n}{2}} \mathbb{C}^{n-1}
\simeq \bigwedge_{+}^{\frac{n}{2}} \mathbb{C}^n  \tilde{\oplus} \bigwedge_{-}^{\frac{n}{2}} \mathbb{C}^n.
$$
In particular, 
${\tau_{\frac{n}{2}}^\pm}_{|M}=\sigma_{\frac{n}{2}}$.
  \end{enumerate}
 \end{lemma}
  Above, we used the tilde symbol in the direct sum $R_1 \tilde{\oplus} R_2$ to indicate that the representations $R_1$ and $R_2$ are equivalent. 
  
 We will refer to   {\it generic case} for  $1 \leqslant p \leqslant \frac{n}{2}$ with $p\neq \frac{n-1}{2}, \frac{n}{2}$ (we will not deal with the well-known case $p=0$) and    {\it special cases} for the cases where $n$ is odd and $p=\frac{n-1}{2}$ or $n$ even and $p=\frac{n}{2}$.

For $\tau=\tau_1, \cdots,     \tau_{\frac{n-1}{2}},\tau^\pm_{\frac{n}{2}}$, we denote by  $\widehat{M}(\tau)$ the set of representations in $\widehat{M}$ that occur   in the restriction of $\tau$ to $M$.   According to  the above branching rules we have, 
\begin{equation}\label{MMM}
   	\widehat M(\tau)=\begin{cases}
   	\widehat M(\tau_p)=\{\sigma_{p-1},\sigma_{p}\} & \text{ for $p<\frac{n-1}{2}$},\\
   	\widehat M(\tau_p)=\{\sigma_{p-1},\sigma_{p}^+,\sigma_p^-\} & \text{ for  $p=\frac{n-1}{2}$},\\
   	\widehat M(\tau_p^\pm)=\{\sigma_{p}\} & \text{ for  $p=\frac{n}{2}$.}
   	\end{cases}	
   \end{equation}
  
 For  $1\leqslant p\leqslant \frac{n}{2}$ and $\sigma_q\in \widehat M(\tau_p), $   the pair of representations $(\tau_p, \sigma_q) $  will stand for 
\begin{equation}
\label{qp}
(\tau_p,\sigma_p)
=\begin{cases}
	(\tau_p,\sigma_p) & \text{if $p$ is generic}\cr
	(\tau_{\frac{n-1}{2}},\sigma^\pm_{\frac{n-1}{2}}) & \text{if $p=\frac{n-1}{2}$  }\cr
	(\tau_{\frac{n}{2}}^\pm,\sigma_{\frac{n}{2}}) & \text{if $p=\frac{n}{2}$ }\cr
\end{cases}
\end{equation}
while  the pair $(\tau_p, \sigma_{p-1})$ represents 
\begin{equation}
\label{qp-1}
(\tau_p,\sigma_{p-1})
=\begin{cases}
	(\tau_p,\sigma_{p-1}) & \text{if $p$ is generic}\\
	(\tau_{\frac{n-1}{2}},\sigma_{\frac{n-1}{2}-1}) & \text{if $p=\frac{n-1}{2}$ }\\
\end{cases}
\end{equation}
  
Let $\sigma\in \widehat{M}(\tau)$. Since $T_{eM} K/M=\mathfrak k/\mathfrak m\simeq \mathfrak a^\perp\simeq \mathbb R^{n-1}$ then   the space 
$\bigwedge^q \partial H^n(\mathbb R):=\bigwedge^q T^*_{\mathbb C}\partial H^n(\mathbb R)$ of differential $q$-forms on $\partial H^n(\mathbb R)$ is canonically isomorphic to the homogenous bundle  $K\times_M V_{\sigma}=K\times_M \bigwedge^q \mathbb C^{n-1}$, with $q=p, p-1$.   

For $1<r<\infty$, the space $L^r(\bigwedge^q \partial H^n(\mathbb R))$   of $L^r$ $q$-forms   on $\partial H^n(\mathbb R)$ will be identified with
the space  $L^r(K/M;\sigma)$    of vector valued functions   $f:K\to V_\sigma$ satisfying the identity
\begin{equation}\label{aout8}
	f(km)=\sigma(m)^{-1}f(k),\;  \text{for all $k\in K,\ m\in M$}
\end{equation}
and such that
$$\parallel f\parallel_{L^r(K/M;\sigma)}=\left( \int_K \parallel f(k)\parallel^r\, {\rm d}k\right)^{\frac{1}{r}}<\infty.
$$ 
Similarly, we will  identify the space $C^{-\omega}(\bigwedge^q \partial H^n(\mathbb R))$ of $q$-hyperforms on $\partial H^n(\mathbb R)$ with the space $C^{-\omega}(K/M;\sigma)$ of vector-valued hyperfunctions $f:K\to V_\sigma$ satisfying the identity \eqref{aout8}.

  \section{Poisson transform on differential forms}\label{sec3}

 We will now introduce the  Poisson transform  for differential forms on $\partial H^n(\mathbb{R})$.  
Let  $\tau =\tau_1,\cdots,\tau_{\frac{n-1}{2}},   \tau_{\frac{n}{2}}, \tau^\pm_{\frac{n}{2}}$ and $\sigma\in \widehat{M}(\tau)$. Let $\ib_\sigma^\tau : V_\sigma\to V_\tau$ the natural embedding.  
    For any $\lambda\in\mathbb C$, the Poisson transform
    $$
\mathcal{P}^{\tau}_{\sigma,\lambda}: C^{-\omega}(K/M;\sigma)\to C^\infty(G/K;\tau)
$$ 
is given by
\begin{align}\label{pois}
\mathcal{P}^{\tau}_{\sigma,\lambda}f(g)=  {d_{\tau,\sigma}}
 \int_K \diff k\,{\rm e}^{-(i\lambda+\rho)H(g^{-1}k)}  \tau(\kappa(g^{-1}k)) \, \ib_\sigma^\tau    f(k),
\end{align}
 where  
 \begin{equation}\label{d26sept}
 d_{\tau,\sigma}=\sqrt{\frac{\dim \tau}{\dim \sigma}}.
 \end{equation}

Notice that, for $\tau_p\in \{\tau_1,\cdots,\tau_{\frac{n-1}{2}},   \tau_{\frac{n}{2}}\}$ and $\sigma=\sigma_p$, the Poisson transform  $\mathcal{P}^{\tau_p}_{\sigma_p,\lambda}$ is up to a constant the Poisson transform $\Phi_p^{\rho-i\lambda}$ investigated by Gaillard in \cite{Gaillard}.

To present the main result from \cite{Gaillard}, let us review the description of     the algebra $\mathbb D(G/K;\tau_p)$ of $G$-invariant differential operators acting on differential forms on $H^n(\mathbb R)$. 

Let $d:C^\infty\!\! \bigwedge^p H^n(\mathbb R) \rightarrow C^\infty\!\! \bigwedge^{p+1} H^n(\mathbb R)$ is the exterior differentiation operator, $d^* =(-1)^{n(p+1)+1} \star d\, \star$ is the  
co-differentiation operator, and $\Delta=dd^*+d^*d$ is the Hodge-de Rham Laplacian. Here $\star$   
 is the Hodge $\star$-operator. 
 Then it is established in \cite[Theorem 3.1]{Gaillard3} and  \cite[Corollary 2.2]{Pedon-these} that 
$$
 \mathbb{D}(G/K;  \tau) = \begin{cases}\mathbb{C}\left[ d d^* ,  d^* d\right] & \text { if } \tau=\tau_p, \, p<\frac{n-1}{2}\\ 
\mathbb{C}\left[d d^* , \star d \right] & \text { if } \tau=\tau_p,\; p=\frac{n-1}{2}, \\ 
\mathbb{C}[\Delta] & \text { if } \tau=\tau_p^\pm,\;  p=\frac{n}{2},\\
\mathbb{C}[\star, d\star d] & \text { if } \tau=\tau_p,\;  p=\frac{n}{2},
\end{cases}
$$
In particular,  $\mathbb D(G/K;\tau)$ is a commutative algebra except for $\tau=\tau_{n/2}$ (last case). \\

Above,  we have identified  the operators  $\Delta$, $d^*$, $d$ and $\star$ with their counterpart under the identification 
 $C^\infty(\bigwedge^p( H^n(\mathbb R)) \simeq C^\infty(G/K; \tau_p)$. This convention will be used throughout the paper.

 Let  $\tau_p\in \{\tau_1,\cdots,\tau_{\frac{n-1}{2}},   \tau_{\frac{n}{2}}\}$ and consider the component $\sigma_p$ of ${\tau_p}_{|M}$. 
 For $\lambda \in\mathbb C$, 
 we consider the following space of coclosed $p$-eigenforms of the Hodge-de Rham Laplacian:
$$
\mathcal{E}_{\sigma_p,\lambda}(G/K; \tau_p)=
	\{F\in C^\infty(G/K; \tau_p) : \; \Delta\, F=(\lambda^2+(\rho-p)^2)F, \,\text{ and }\;  d^* F=0\}.
$$

Then Gaillard's result may be stated as follows:
 
 \begin{theorem}[{\cite[Theorem 2']{Gaillard}}]\label{gaillard} 
Let $\tau_p=\tau_1,\cdots,\tau_{\frac{n-1}{2}}, \tau_{\frac{n}{2}}$  and 
$\lambda\in \mathbb{C}$. 
The Poisson transform 
$$
\mathcal{P}^{\tau_p}_{\sigma_p,\lambda} \colon C^{-\omega}(K/M;\sigma_p) \to \mathcal{E}_{\sigma_{p},\lambda}(G/K; \tau_p)
$$
is a  $G$-isomorphism  if and only if 
\begin{equation}\label{Pbijectivity}
i\lambda\notin \mathbb{Z}_{\leqslant 0}-\rho \cup \{p-\rho\}.
\end{equation}
\end{theorem}

 A similar result was announced in \cite[Theorem 4]{Juhl}.

To get the analog to Theorem \ref{gaillard} for the   pairs  $(\tau_p,\sigma_{p-1})$ with $1\leqslant p\leqslant \frac{n-1}{2}$, and $(\tau_p,\sigma_{p}^\pm)$ with $p=\frac{n-1}{2}$  , we  consider the following eigensapces:
$$
\mathcal{E}_{\sigma_{p-1},\lambda}(G/K; \tau_p)
=\{F\in C^\infty(G/K; \tau_p) : \;  \Delta F= (\lambda^2+(\rho-p+1)^2)F \; \text{and}\; d F=0\}, \; 1\leqslant p\leqslant \frac{n-1}{2},
$$
and 
$$
\mathcal{E}_{\sigma^\pm_p,\lambda}(G/K; \tau_p)
=\{F\in C^\infty(G/K; \tau_p) : \;  \star dF=\pm i^{p^2-1} \lambda F  \; \text{and}\;  d^*F=0\},\; p=\frac{n-1}{2}.
$$

\begin{corollary}\label{bijective1}
Let $\lambda\in \mathbb{C}$. 

$(1)$ For $p\leqslant\frac{n-1}{2}$,  the Poisson transform 
$$
\mathcal{P}^{\tau_p}_{\sigma_{p-1},\lambda}:C^{-\omega}(K/M;\sigma_{p-1})\to \mathcal{E}_{\sigma_{p-1},\lambda}(G/K;\tau_p)
$$
is a  $G$-isomorphism
if and only if 
\begin{equation}
i\lambda\notin \mathbb{Z}_{\leqslant 0}-\rho\cup \{\rho-p+1\}.	
\end{equation}

$(2)$ For $p=\frac{n-1}{2}$,  the Poisson transform 
$$
\mathcal{P}^{\tau_p}_{\sigma_{p}^\pm,\lambda}:C^{-\omega}(K/M;\sigma_{p}^\pm)\to \mathcal{E}_{\sigma_{p}^\pm,\lambda}(G/K;\tau_p)
$$
is a  $G$-isomorphism 
if and only if $$i\lambda\notin \mathbb Z_{\leqslant 0}-\rho.$$
\end{corollary}
 \begin{proof}
	Notice that the Hodge operator induces the equivalences $\tau_p\sim \tau_{n-p}$ and $\sigma_q\sim \sigma_{n-1-q}$. 
	
	(1) Using the identity  
$$
\star (\mathcal{P}_{p-1,\lambda}^{\tau_{p}}f)=(-1)^{p(p-1)}\mathcal{P}_{n-p,\lambda}^{\tau_{n-p}}(\star  f),
$$
 as well as the relations
$$ \begin{aligned}
 d^*\star&=(-1)^{n+1-p^2}\star\,d\\
   \Delta\star&=\star\Delta,
\end{aligned}$$
we easily see that the following diagram  
$$
\xymatrix{
    C^{-\omega}(K/M;\sigma_{p-1}) \ar[r]^{\mathcal{P}^{\tau_p}_{\sigma_{p-1},\lambda}} \ar[d]_\star  & \mathcal{E}_{\sigma_{p-1},\lambda}(G/K;\tau_p) \ar[d]^\star \\
    C^{-\omega}(K/M;\sigma_{n-p}) \ar[r]^{\mathcal{P}^{\tau_{n-p}}_{\sigma_{n-p},\lambda}} & \mathcal{E}_{\sigma_{n-p},\lambda}(G/K;\tau_{n-p})
  }
  $$
is commutative and the first part of the  corollary follows from Theorem \ref{gaillard}.\\
(2)  Let  $p=\frac{n-1}{2}$. Then $\sigma_p=\sigma^+_p\oplus \sigma^-_p$.  By using    \cite[(4.24) and (4.38)]{Pedon} we have
$$\operatorname{Im} \mathcal{P}_{\sigma^\pm_\frac{n-1}{2},\lambda}^{\tau_\frac{n-1}{2}}\subset  \mathcal{E}_{\sigma_{\frac{n-1}{2}}^\pm,\lambda}(G/K; \tau_{\frac{n-1}{2}}).
$$ 
Since $(\star d)^2=(-1)^{\frac{n(n-1)}{2}} d^* d$, it follows that $\mathcal{E}_{\sigma_p,\lambda}(G/K;\tau_p)=\mathcal{E}_{\sigma_p^+,\lambda}(G/K;\tau_p)\oplus \mathcal{E}_{\sigma_p^-,\lambda}(G/K;\tau_p)$. This together with Theorem \ref{gaillard} give the desired  result.
\end{proof}

 It remains to prove the analog of Theorem \ref{gaillard}   for the pair $(\tau_p^\pm,\sigma_p)$ with $p=\frac{n}{2}$.
 Since we could not obtain this result directly from Gaillard's theorem, we will instead turn to a more general result provided by Olbrich in \cite{olbrich}. Notice that we could have applied Olbrich's  result to the other cases, but we intentionally chose to remain within the framework of differential forms.

Let
$$
\mathcal{E}_{\sigma_p, \lambda}\left(G/K; \tau_p^{ \pm}\right)=\left\{F^{ \pm} \in C^{\infty}\left(G/K; \tau_p^{ \pm}\right): \Delta F^{ \pm}=\left(\lambda^2+\frac{1}{4}\right) F^{ \pm}\right\},\;\; p=\frac{n}{2}.
$$
Then the corollary below follows from \cite[Theorem 4.16]{olbrich}.
\begin{corollary}\label{cor-olbrich}
For $p=\frac{n}{2}$, the Poisson transform
$$
\mathcal{P}_{\sigma_p, \lambda}^{\tau_p^{ \pm}}: C^{-\omega}\left(K/M; \sigma_p\right) \rightarrow \mathcal{E}_{\sigma_p, \lambda}\left(G/K; \tau_p^{ \pm}\right)
$$
is a  $G$-isomorphism  if and only if
$$
i \lambda \notin \mathbb{Z}_{\leqslant 0}-\rho \cup\left\{-\frac{1}{2}\right\} .
$$	
\end{corollary}

It follows from Theorem \ref{gaillard}, Corollary \ref{bijective1} and Corollary \ref{cor-olbrich} that, for $1\leqslant p\leqslant \frac{n}{2}$ and $q=p-1, p$, the Poisson transform $\mathcal{P}_{ q,\lambda}^{ p}$ maps $L^r(K/M;\sigma_q)$ into  $\mathcal E_{\sigma_q,\lambda}(G/K ; \tau_p)$. We aim to precisely characterize the image of  $L^r(K/M;\sigma_q)$. This will be the focus of the following sections.

\section{Fatou-type theorem  and the Harish-Chandra $c$-function}\label{sec4} 
The following fact about the Jacobi functions will be necessary for subsequent sections (see, e.g.,  \cite{Ko}).
 The Jacobi function is defined as 
 \begin{equation}\label{jacobi}
 \phi_\mu^{(\alpha,\beta)}(t)={}_2F_1\left(\frac{i\mu+\alpha+\beta+1}{2},\frac{-i\mu+\alpha+\beta+1}{2};\alpha+1; -\sinh^2 t\right),  
 \end{equation}
 with $\Re(\alpha+1)>0$ and   ${}_2F_1$ is the classical hypergeometric function.
We shall need the following asymptotic behavior of   Jacobi functions. The asymptotic behavior of $ \phi_\mu^{(\alpha,\beta)}(t)$ as $t\rightarrow \infty$ is given by 
\begin{equation}\label{Jacobi}
\phi_\mu^{(\alpha,\beta)}(t)={\rm e}^{(i\mu-\alpha-\beta-1)t}\big(c_{\alpha,\beta}(\mu)+{\bf o}(1)\big)\,\; \text{as}\, \,  t\rightarrow \infty
\end{equation}
for \(\Re(i\mu)>0\), where 
\begin{equation}\label{simple}
  c_{\alpha,\beta}(\mu)=\frac{2^{\alpha+\beta+1-i\mu}\Gamma(\alpha+1)\Gamma(i\mu)}{\Gamma\left(\frac{i\mu+\alpha+\beta+1}{2}\right)\Gamma\left(\frac{i\mu+\alpha-\beta+1}{2}\right)}.
\end{equation}

Let $\tau\in\{\tau_1,\cdots,\tau_{\frac{n-1}{2}},   \tau^\pm_{\frac{n}{2}}\}$ and $\sigma\in \widehat{M}(\tau)$. We define the Hardy-type space $\mathcal{E}^{r}_{\sigma,\lambda}(G/K ; \tau)$,  as the  subspace of $F\in \mathcal E_{\sigma,\lambda}(G/K;\tau)$ such that
\begin{equation}\label{hardy}
\Vert F\Vert_{r,\lambda}:=\sup_{t>0}{\rm e}^{ (\rho-\Re(i\lambda))t}\left(\int_K \Vert F(ka_t) \Vert_{V_{\tau_p}}^r {\rm d}k\right)^\frac{1}{r} <\infty.
\end{equation}

\begin{proposition}\label{pro-gamma-lambda}   Let $\tau\in\{\tau_1,\cdots,\tau_{\frac{n-1}{2}},   \tau^\pm_{\frac{n}{2}}\}$ and $\sigma\in \widehat{M}(\tau)$.
For  $\lambda\in\mathbb{C}$ such that  $\Re(i\lambda)>0$, and for $r>1,$  the Poisson transform of every $f\in L^r(K/M;\sigma)$ belongs to $\mathcal{E}^{r}_{\sigma,\lambda}(G/K ; \tau).$ More precisely,
\begin{equation}\label{E1}
\Vert  \mathcal P_{\sigma,\lambda}^\tau f \Vert_{r,\lambda}  \leqslant \gamma_\lambda d_{\tau, \sigma}  \|f\|_{L^r\left(K / M ; \sigma\right)} ,
\end{equation}
for some positive constant $\gamma_\lambda$,  where $d_{\tau,\sigma}$ is as in \eqref{d26sept}. 
\end{proposition}

\begin{proof} We prove the inequality \eqref{E1}  for $\tau=\tau_p$, with   generic $p$,  and $\sigma=\sigma_q$ with $q=p, p-1$. We will  use the notation $\mathcal P_{\sigma_q,\lambda}^{\tau_p}=\mathcal P_{q,\lambda}^p$  and $d_{p,q}=d_{\tau_p,\sigma_q}$.  The  special  cases $p=\frac{n-1}{2}$ and $p=\frac{n}{2}$ can be similarly established.

By \eqref{pois} we have
\begin{equation*}\begin{split}
&\parallel \mathcal{P}_{q,\lambda}^pf(ka_t)\parallel_{ \bigwedge^p\mathbb C^n} \\
&\leqslant d_{p,q} \int_K {\rm e}^{-(\Re(i\lambda)+\rho)H(a_t^{-1}k^{-1}h)}\parallel \tau_p(\kappa(a_t^{-1}k^{-1}h)  \boldsymbol  \ib^{p}_{q} (f(h))\parallel_{ \bigwedge^p\mathbb C^n}{\rm d}h\\
& \leqslant d_{p,q} \int_K {\rm e}^{-(\Re(i\lambda)+\rho)H(a_t^{-1}k^{-1}h)}\parallel \boldsymbol  \ib^{p}_{q}(f(h))\parallel_{ \bigwedge^p\mathbb C^n}{\rm d}h,\\
\end{split}\end{equation*} 
where the last inequality follows from the unitarity of the representation $\tau_p$. 
 Since $\boldsymbol  \ib_q^p$ is an isometric embedding, it follows  that
\begin{eqnarray*} 
\parallel \mathcal{P}_{q,\lambda}^pf(ka_t)\parallel_{ \bigwedge^p\mathbb C^n} 
&\leqslant& d_{p,q} \int_K {\rm e}^{-(\Re(i\lambda)+\rho)H(a_t^{-1}k^{-1}h)}\parallel   f(h)\parallel_{\bigwedge^q\mathbb C^{n-1}}  {\rm d}h \\
&=& d_{p,q}\, e_{\lambda,t} \,\ast \parallel f(\cdot)\parallel_{\bigwedge^q\mathbb C^{n-1}}(k),
 \end{eqnarray*}
 where ${\rm e}_{\lambda,t}(g):={\rm e}^{-(\Re(i\lambda)+\rho)H(a_t^{-1}g^{-1})}$  and $\ast$ is the standard convolution  over $K$.
Therefore, for $r>1,$ Young's inequality implies \[
\left(\int_K \parallel \mathcal{P}^p_{q,\lambda}f(ka_t)\parallel_{ \bigwedge^p\mathbb C^n}^r{\rm d}k\right)^{{1}/{r}}\leqslant d_{p,q}
  \| {\rm e}_{\lambda,t}\|_{L^1(K)}
\parallel f\parallel_{L^r(K/M;\,\sigma_q)}.
\]
 \\
Now,  $$\parallel {\rm e}_{\lambda,t}\parallel_{L^1(K)}=\int_K {\rm e}^{-(\Re(i\lambda)+\rho)H(a_t^{-1}k^{-1})}{\rm d}k
  = \phi_{-i\Re(i\lambda)}^{(\rho-\frac{1}{2},-\frac{1}{2})}(t), $$ 
where $\phi_\mu^{(\alpha,\beta)}$ is the Jacobi function \autoref{jacobi}. 
Since $\Re(i\lambda)>0$, the asymptotic behavior \autoref{Jacobi} gives  
$$\parallel {\rm e}_{\lambda,t}\parallel_{L^1(K)}
=e^{(\Re(i\lambda)-\rho)t}\left(c_{\rho-\frac{1}{2},-\frac{1}{2}}(-i\Re(i\lambda))+ {\bf o}(1)\right) \; \text{as} \; t\to\infty,$$
where the constant $c_{\rho-\frac{1}{2},-\frac{1}{2}}(-i\Re(i\lambda))$ is given by \eqref{simple}. 
This finishes the proof of \eqref{E1} for generic $p.$   
\end{proof}

 Our next objective is to prove an analog of Proposition \ref{pro-gamma-lambda} where the inequality is reversed. 
We will establish a Fatou-type theorem for the Poisson transform to achieve this.

 Let $\bar{N}=\theta(N)$, where $\theta$ is the Cartan involution of $G$. For \(\lambda\in{\C}\),    the generalized Harish-Chandra $c$-function is defined  as
\begin{equation}\label{c-function}
\mathbf c(\lambda,\tau_p)=\int_{\bar{N}}{\rm e}^{-(i\lambda+\rho)H(\overline{n})}\tau_p(\kappa(\overline{n})){\rm d}\overline{n}\in \mathrm{End}(V_{\tau_p}).
\end{equation}
Here ${\rm d}\bar n$ is the Haar measure on $\bar N$ with the normalization
$$\int_{\bar N} {\rm e}^{-2\rho(H(\bar n))} {\rm d}\bar n=1.$$
The integral \autoref{c-function} converges for \(\lambda\) such that \(\Re(i\lambda)>0\) and has a meromorphic continuation to \({\C}\)   (see, e.g.   \cite{wallach2}).

Let $\tau_p\in\{\tau_1,\cdots,\tau_{\frac{n-1}{2}},   \tau^\pm_{\frac{n}{2}}\}$ and  let $\sigma_q\in \widehat M(\tau_p)$.  
 Since the restriction $\mathbf c(\lambda,\tau_p)_{|V_{\sigma_q}}$ commutes with $\sigma_q $,  Schur's lemma implies that  there exists a complex scalar $c_{\sigma_q}(\lambda,\tau_p)$ such that 
 $\mathbf c(\lambda,\tau_p)_{|V_{\sigma_q}}=c_{\sigma_q}(\lambda,\tau_p)  \mathrm{Id}_{V_{\sigma_q}}$. Therefore, for  generic $p$, we have
 \begin{equation}\label{decomp-c-1} 
 \mathbf c(\lambda,\tau_p)=c_{\sigma_{p-1}}(\lambda,\tau_p)  \mathrm{Id}_{\bigwedge^{p-1}\mathbb C^{n-1}}+c_{\sigma_p}(\lambda,\tau_p)\mathrm{Id}_{\bigwedge^{p}\mathbb C^{n-1}}   .
 \end{equation}
For simplicity,  we shall denote the  scalar component  $c_{\sigma_q}(\lambda,\tau_p)$ by $c_q(\lambda,p).$ 

Similarly, for  $p=\frac{n-1}{2}$ (when $n$ is odd), there exist three scalar coefficients $c_{{{n-3}\over 2}}(\lambda, {{n-1}\over 2}),$ 
$c_{{{n-1}\over 2}}^+(\lambda, {{n-1}\over 2})$ and $c_{      {{n-1}\over 2}}^-(\lambda, {{n-1}\over 2})$ such that 
 \begin{multline}
 \label{decomp-c-2-1} \mathbf c(\lambda,\tau_{{n-1}\over 2})= 
c_{ {{n-3}\over 2}}(\lambda, {{n-1}\over 2})  \mathrm{Id}_{\bigwedge^{{n-3}\over  2}\mathbb C^{n-1}}+\\
 c_{ {{n-1}\over 2}}^+(\lambda, {{n-1}\over 2})\mathrm{Id}_{\bigwedge_+^{{n-1}\over 2}\mathbb C^{n-1}}
 +c_{       {{n-1}\over 2}}^-(\lambda,\ {{n-1}\over 2})\mathrm{Id}_{\bigwedge_-^{{{n-1}\over 2}}\mathbb C^{n-1}}   
\end{multline}
 and for $p=\frac{n}{2}$ (when $n$ is even), we have 
 \begin{equation}\label{decomp-c-3-1}
   \mathbf c (\lambda,\tau_{n\over 2}^\pm)=c_{ {n\over 2}}^\pm (\lambda,  {n\over 2})\mathrm{Id}_{\bigwedge_\pm^{n\over 2}\mathbb C^{n}} . \end{equation}

To simplify our notation, we will use the unified symbol $c_q(\lambda, p)$ to represent all scalar  components appearing in the identities  \eqref{decomp-c-1}, \eqref{decomp-c-2-1}, and \eqref{decomp-c-3-1}. Although the same symbol will be employed, the value and interpretation of each constant will be distinguishable based on the specific context in which it is used.

 For generic $p$, explicit expressions of the scalars $c_{q}(\lambda,\ p)$    are provided in \cite{van-th} through a direct computation of the integral \autoref{c-function}.
Using a different approach,  we will calcule the scalar components   $c_{q}(\lambda,\ p)$  in both generic and special cases  (see    Proposition \ref{c-explicit} and Proposition \ref{c-explicit-special}). 

The following lemma is needed for later use.

\begin{lemma}\label{lemma-c-f} Let $\tau_p\in\{\tau_1,\cdots,\tau_{\frac{n-1}{2}},   \tau^\pm_{\frac{n}{2}}\}$ and $\sigma_q\in \widehat{M}(\tau_p)$.
 \begin{enumerate}[\upshape (1)]
\item For every $v\in V_{\sigma_q}$,
 \begin{equation}\label{lemma-c-f-1}
 \| \mathbf c(\lambda,\tau_p) \boldsymbol   \ib^p_q(v)\|_{V_{\tau_p}}=
|c_{ q}(\lambda, p)| \|v\|_{V_{\sigma_q}} .
\end{equation}
\item 
For every linear operator $L$ from a vector space $V$   to $V_{\sigma}$,
  \begin{equation}\label{lemma-c-f-2}  
 \| \mathbf c(\lambda,\tau_p) \boldsymbol   \ib^p_q L\|_{\rm HS}=  |c_{ q}(\lambda, p)| \|L\|_{\rm HS},
  \end{equation}
  where $\|\cdot\|_{\rm HS}$ is the Hilbert-Schmidt norm.
  \end{enumerate}
 \end{lemma}
\begin{proof} Follows immediately  from \eqref{decomp-c-1}, \eqref{decomp-c-2-1} and \eqref{decomp-c-3-1}.
\end{proof}

 \begin{proposition}[Fatou-type theorem] \label{Fatou}
 Let $\tau_p\in\{\tau_1,\cdots,\tau_{\frac{n-1}{2}},   \tau^\pm_{\frac{n}{2}}\}$ and $\sigma_q\in \widehat{M}(\tau_p)$.
Let \(\lambda\in {\C}\) such that \(\Re(i\lambda)>0\).   Then
\[\lim_{t\rightarrow \infty}{\rm e}^{(\rho-i\lambda)t} \mathcal{P}_{q,\lambda}^pf(ka_t)= \,d_{p,q} \mathbf c(\lambda,\tau_p)  \boldsymbol   \ib^p_q  (f(k)),\]

$(i)$ uniformly on $K$ for \(f\in C^\infty(K/M;\sigma_q)\),

$(ii)$ in the $L^r(K;\Lambda^p\mathbb C^n)$-sense,  for every $f\in$ \(L^r(K/M;\sigma_q)\).
\end{proposition}

\begin{proof}
Statement $(i)$   has been proved previously; see, for instance,   \cite{van} and  \cite{Yan}. \\
 $(ii)$  Assume $f\in L^r(K/M;\sigma_q)$ and  $\varepsilon>0$. By density, we can find a  $K$-finite vector $\varphi$ in $C^\infty(K/M;\sigma_q)$  such that 
 $\|f-\varphi\|_{L^r(K/M; \,\sigma_q)}<\varepsilon$. Put $\mathcal P^t_\lambda(f)(k)=\mathcal{P}_{q,\lambda}^p f(ka_t)$, 
  then 
  \begin{eqnarray*}
  \|    {\rm e}^{-(i\lambda-\rho)t} \mathcal P^t_\lambda(f)(k) -d_{p,q}\mathbf c(\lambda,\tau_p) \boldsymbol \ib_q^p f(k)\|_{\bigwedge^p\mathbb C^n}^r
  &  \hspace{-.2cm} \leqslant& \|   {\rm e}^{-(i\lambda-\rho)t} \mathcal P^t_\lambda(f-\varphi)(k)\|_{\bigwedge^p\mathbb C^n}^r  \\
  & & \hspace{-.7cm} +  \|   {\rm e}^{-(i\lambda-\rho)t} \mathcal P^t_\lambda(\varphi)(k)-d_{p,q} \mathbf c(\lambda,\tau_p)\boldsymbol  \ib_q^p\varphi(k) \|_{\bigwedge^p\mathbb C^n}^r\\
  & & \hspace{-.7cm} + d_{p,q}^r  \|\mathbf c(\lambda,\tau_p)\boldsymbol \ib_q^p \varphi(k)-\mathbf c(\lambda,\tau_p)\boldsymbol \ib_q^p f(k)\|_{\bigwedge^p\mathbb C^n}^r.
  \end{eqnarray*}
  From  Proposition \ref{pro-gamma-lambda} we obtain
  $$\int_K\|   {\rm e}^{-(i\lambda-\rho)t} \mathcal P^t_\lambda(f-\varphi)(k)\|_{\bigwedge^p\mathbb C^n}^r{\rm d}k 
  \leqslant \gamma_\lambda^r  d_{p,q}^r  \| f-\varphi\|^r_{L^r(K/M;\,\sigma_q)},$$
  and form part $(i)$  above it follows that
  $$\lim_{t\to \infty}  \int_K \|   {\rm e}^{-(i\lambda-\rho)t} \mathcal P^t_\lambda(\varphi)(k)-d_{p,q} \mathbf c(\lambda,\tau_p) \boldsymbol \ib_q^p \varphi(k) \|_{\bigwedge^p\mathbb{C}^n}{\rm d}k=0.$$
Further, according to     \autoref{lemma-c-f-1}  we obtain
  \begin{eqnarray*}
  \int_K  \|\mathbf c(\lambda,\tau_p)\boldsymbol \ib_q^p \varphi(k)-\mathbf c(\lambda,\tau_p)\boldsymbol \ib_q^p f(k)\|_{\bigwedge^p\mathbb C^n}^r 
  {\rm d}k
  &\leqslant & |c_q(\lambda, p)|^r \| f-\varphi\|^r_{L^r(K/M;\,\sigma_q)}.\\
   \end{eqnarray*}
In conclusion, we have
  $$\lim_{t\to \infty} \int_K \|   {\rm e}^{-(i\lambda-\rho)t} \mathcal P^t_\lambda(f)(k) -d_{p,q} \mathbf c(\lambda,\tau_p) \boldsymbol \ib_q^p f(k)\|^r_{\bigwedge^p\mathbb C^n} {\rm d}k \leqslant \varepsilon^r d_{p,q}^r(\gamma^r_\lambda + | c_q(\lambda,p)|^{r}),$$
  and this proves the desired statement.
\end{proof}

The following inequalities are crucial.
\begin{proposition}\label{nec-cdt} Let $\tau_p\in\{\tau_1,\cdots,\tau_{\frac{n-1}{2}},   \tau^\pm_{\frac{n}{2}}\}$ and $\sigma_q\in \widehat{M}(\tau_p)$.
For every  $\lambda\in \mathbb C$  such that   $\Re(i\lambda) >0$, there exists a positive constant $\gamma_\lambda$ such that for all  $f\in L^r (K/M;\sigma_q)$,   $1<r<\infty$, we have
\begin{equation}\label{esti-F}
d_{p,q} |c_q(\lambda,p)| \| f\|_{ L^r (K/M;\,\sigma_q)}\leqslant \|\mathcal{P}_{q,\lambda}^p f\|_{ {r,\lambda}} \leqslant  d_{p,q}\,\gamma_\lambda \| f\|_{ L^r (K/M;\,\sigma_q)}.
\end{equation}
\end{proposition}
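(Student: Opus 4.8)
The proof of Proposition \ref{nec-cdt} splits naturally into the two inequalities, and the right-hand one is already in hand. Indeed, for the upper bound I would simply invoke Proposition \ref{pro-gamma-lambda}: multiplying the estimate \eqref{E1} by ${\rm e}^{(\rho-\Re(i\lambda))t}$ and taking the supremum over $t>0$ gives
\begin{equation*}
\|\mathcal{P}_{q,\lambda}^p f\|_{\mathcal E_{q,\lambda}^r}=\sup_{t>0}{\rm e}^{(\rho-\Re(i\lambda))t}\left(\int_K\|\mathcal{P}_{q,\lambda}^p f(ka_t)\|_{\Lambda^p\mathbb C^n}^r\,{\rm d}k\right)^{1/r}\leq c_{p,q}\,\gamma_\lambda\,\|f\|_{L^r(K/M;\,\sigma_q)},
\end{equation*}
which is exactly the right inequality in \eqref{esti-F}, with the same constant $\gamma_\lambda$.

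For the lower bound the plan is to use the Fatou-type result, Theorem \ref{Fatou}(ii). Fix $f\in L^r(K/M;\sigma_q)$. For each $t>0$ the inner integral is controlled from below by a Fatou-type limit: writing $g_t(k)={\rm e}^{(\rho-i\lambda)t}\mathcal{P}_{q,\lambda}^p f(ka_t)$, Theorem \ref{Fatou}(ii) asserts that $g_t\to c_{p,q}\,\mathbf c(\lambda,p)\iota_q^p f$ in $L^r(K;\Lambda^p\mathbb C^n)$ as $t\to\infty$. Since the $L^r$-norm is continuous for the $L^r$-topology, and since $|{\rm e}^{(\rho-i\lambda)t}|={\rm e}^{(\rho-\Re(i\lambda))t}$, we get
\begin{equation*}
\liminf_{t\to\infty}{\rm e}^{(\rho-\Re(i\lambda))t}\left(\int_K\|\mathcal{P}_{q,\lambda}^p f(ka_t)\|_{\Lambda^p\mathbb C^n}^r\,{\rm d}k\right)^{1/r}=\|g_t\|_{L^r}\Big|_{t\to\infty}=c_{p,q}\left(\int_K\|\mathbf c(\lambda,p)\iota_q^p f(k)\|_{\Lambda^p\mathbb C^n}^r\,{\rm d}k\right)^{1/r}.
\end{equation*}
Now I would apply the pointwise identity \eqref{lemma-c-f-1} of Lemma \ref{lemma-c-f}, namely $\|\mathbf c(\lambda,p)\iota_q^p(v)\|_{\Lambda^p\mathbb C^n}=|c_q(\lambda,p)|\,\|v\|_{\Lambda^q\mathbb C^{n-1}}$ for every $v\in V_{\sigma_q}$, to rewrite the last integral as $|c_q(\lambda,p)|^r\int_K\|f(k)\|_{\Lambda^q\mathbb C^{n-1}}^r\,{\rm d}k=|c_q(\lambda,p)|^r\,\|f\|_{L^r(K/M;\,\sigma_q)}^r$. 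Since the supremum over $t>0$ defining $\|\mathcal{P}_{q,\lambda}^p f\|_{\mathcal E_{q,\lambda}^r}$ dominates the $\liminf$ as $t\to\infty$, we conclude
\begin{equation*}
\|\mathcal{P}_{q,\lambda}^p f\|_{\mathcal E_{q,\lambda}^r}\geq c_{p,q}\,|c_q(\lambda,p)|\,\|f\|_{L^r(K/M;\,\sigma_q)},
\end{equation*}
which is the left inequality in \eqref{esti-F}.

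The only genuine subtlety—and the step I would treat most carefully—is the passage from $L^r$-convergence of $g_t$ to convergence of the norms $\|g_t\|_{L^r}$; this is immediate from the reverse triangle inequality $\big|\,\|g_t\|_{L^r}-\|c_{p,q}\mathbf c(\lambda,p)\iota_q^p f\|_{L^r}\,\big|\leq\|g_t-c_{p,q}\mathbf c(\lambda,p)\iota_q^p f\|_{L^r}\to 0$, so there is really no obstacle once Theorem \ref{Fatou}(ii) is available. One should also note that the condition $\Re(i\lambda)>0$ is exactly what is needed both for Proposition \ref{pro-gamma-lambda} (convergence of the relevant Jacobi function asymptotics) and for the meromorphic $c$-function integral \eqref{c-function} to be given by its convergent integral representation; no further restriction on $\lambda$ enters. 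Finally, combining the two displayed inequalities yields \eqref{esti-F} with a single constant $\gamma_\lambda$, completing the proof; in particular $\mathcal{P}_{q,\lambda}^p$ is injective on $L^r(K/M;\sigma_q)$ whenever $c_q(\lambda,p)\neq 0$, which will be verified from the explicit formula in Proposition \ref{c-explicit}.
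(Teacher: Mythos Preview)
Your proof is correct and follows essentially the same approach as the paper: the upper bound is Proposition \ref{pro-gamma-lambda}, and the lower bound comes from Theorem \ref{Fatou}(ii) together with the identity \eqref{lemma-c-f-1}. The only minor difference is that the paper passes from $L^r$-convergence to an a.e.\ convergent subsequence and then applies the classical Fatou lemma, whereas you use the (arguably cleaner) observation that $L^r$-convergence forces convergence of the $L^r$-norms via the reverse triangle inequality; both routes are standard and yield the same conclusion.
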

\begin{proof} The right-hand side inequality is nothing but the estimate   \autoref{E1}.  For the left-hand side inequality,  
by Proposition \ref{Fatou}[$(ii)$], there exists a sequence  $(t_j)_j$ with $t_j\to\infty$ such that 
$$\lim_{j\rightarrow \infty} \|{\rm e}^{(\rho-i\lambda)t_j} \mathcal{P}_{q,\lambda}^pf(ka_{t_j})\|_{\bigwedge^p\mathbb C^n}=\|d_{p,q} \,\mathbf c(\lambda,\tau_p) \boldsymbol \ib_q^p (f(k))\|_{\bigwedge^p\mathbb C^n} $$
almost everywhere in $K$. Consequently, by  the classical Fatou theorem and  \autoref{lemma-c-f-1} we get
 
\begin{eqnarray*}
d_{p,q}^r |c_q(\lambda,p)|^r \int_K \| f(k)\|^r_{\bigwedge^q\mathbb C^{n-1}}{\rm d}k
&\leqslant&  \sup_{j} {\rm e}^{r\Re(\rho -i\lambda)t_j} \int_K \|\mathcal P^{t_j}_\lambda (f)(k)\|^r_{\bigwedge^p\mathbb C^n}{\rm d}k,
\end{eqnarray*}
which  implies 
$$d_{p,q}  |c_q(\lambda,p)|  \, \|f\|_{ L^r (K/M;\sigma_q)} \leqslant  \|\mathcal{P}_{q,\lambda}^p f\|_{ {r,\lambda}}.$$

\end{proof}

 In the rest of this section, we will see how the asymptotic behavior formula given in Proposition \ref{Fatou} will allow us to give explicitly all the  scalar components  appearing in \eqref{decomp-c-1}, \eqref{decomp-c-2-1}, and \eqref{decomp-c-3-1}.

Let us recall that a continuous function \(F\colon G\rightarrow \mathrm{End}(V_{\tau})\) is called elementary \(\tau_p\)-spherical if \(F\) satisfies:
\begin{itemize}
\item[$(i)$]  $F$ is a $\tau_p$-radial function, i.e.  $F(k_1gk_2)=\tau_p(k_2)^{-1}F(g)\tau_p(k_1^{-1}),$  for all $g\in G$ and $k_1,k_2\in K.$
\item[$(ii)$] For all $v\in V_{\tau_p}$, $g \mapsto  F(g)v$  is a  joint-eigenfunction of all $D\in \mathbb {D}(G/K ; \tau_p)$ with $F(e)=\text{Id}.$
\end{itemize}
It is known that a  $\tau_p$-radial function $F \colon G \to \mathrm{End}(V_{\tau_p})$   is determined by its restriction $F_{|_A}$ to the subgroup $A$ of $G$. Since $A$ and $M$ commute, $F_{|_A}$ becomes an $M$-morphism of $V_{\tau}$. Furthermore,   ${\tau_p}_{|_M}$ decomposes with multiplicity one, therefore by Schur's lemma, $F_{|_A}$ is scalar on each $M$-irreducible component  $V_{{\sigma_q}}$, for $\sigma_q\in\widehat{M}(\tau_p)$. Thus
$$F_{|_A}(a_t)=\sum_{\sigma_q\in\widehat{M}(\tau_p)} f_{\sigma_q}(t) \mathrm{Id}_{ V_{\sigma_q}},
$$ 
the coefficients 
 $f_{\sigma_q}(t)$  are called the scalar components of $F$.
 
It is also known that any $\tau_p$-spherical function is given by the following  Eisenstein integral   by
\begin{equation}\label{spherical}
\Phi_{\sigma_q, \lambda}^{\tau_p}(g) = d_{p,q}^2\int_K {\rm e}^{-(i\lambda+\rho)H(g^{-1}k)}\tau_p(\kappa(g^{-1}k))\boldsymbol   \ib^p_q(\boldsymbol \pi_p^q(\tau_p(k)^{-1})){\rm d}k,
\end{equation}
 where $\boldsymbol \pi_p^q$ is the dual endomorphism of $\boldsymbol   \ib^p_q$. By \cite{Pedon-these}, the scalar components of $\Phi_{\sigma_q,\lambda}^{\tau_p}$ are given in terms of the Jacobi function $\phi_\mu^{(\alpha, \beta)}$ as follows:
  \begin{enumerate}[\upshape (1)] 
 \item When $p$ is generic:

\begin{enumerate}[\upshape (a)]

 \item The scalar components $\varphi_{p-1,\lambda}, \varphi_{p,\lambda}$ of $\Phi^{\tau_p}_{\sigma_p,\lambda}$ are given by
 	\begin{align}
 	\varphi_{p-1,\lambda}(t)&=\phi_\lambda^{(\frac{n}{2},-\frac{1}{2})}(t), \label{phi1}\\
 	\varphi_{p,\lambda}(t)&=\frac{n}{n-p}\phi_\lambda^{(\frac{n}{2}-1,-\frac{1}{2})}(t)-\frac{p}{n-p}\cosh(t)\phi_\lambda^{(\frac{n}{2},-\frac{1}{2})}(t).\label{phi2}
 	\end{align}	
 
 \item 
The scalar components $\varphi_{p-1,\lambda}, \varphi_{p,\lambda}$ of $\Phi^{\tau_p}_{\sigma_{p-1},\lambda}$ are given by
\begin{align} 
 \varphi_{p-1,\lambda}(t)&=\frac{n}{p}\phi_\lambda^{(\frac{n}{2}-1,-\frac{1}{2})}(t)-\frac{n-p}{p}\cosh(t)\phi_\lambda^{(\frac{n}{2},-\frac{1}{2})}(t),\label{phi3}\\
 \varphi_{p,\lambda}(t)&=\phi_\lambda^{(\frac{n}{2},-\frac{1}{2})}(t).\label{phi4}
\end{align}
  \end{enumerate}

  \item In the special case $p=\frac{n-1}{2}:$ 
\begin{enumerate} [\upshape (a)]
\item 
 The scalar components $\varphi_{p-1,\lambda}, \varphi^+_{p,\lambda}, \varphi^-_{p,\lambda} $ of $\Phi^{\tau_p}_{\sigma_{p-1},\lambda}$ are given by
\begin{align} 
\varphi_{p-1,\lambda}(t)&
=\frac{2n}{n-1}\phi_\lambda^{(\frac{n}{2}-1,-\frac{1}{2})}(t)-\frac{n+1}{n-1}\cosh(t)\phi_\lambda^{(\frac{n}{2},-\frac{1}{2})}(t),\label{phi1-sp1}\\
\varphi^\pm_{p,\lambda}(t)&=\phi_\lambda^{(\frac{n}{2},-\frac{1}{2})}(t).\label{phi2-sp1}
 \end{align} 
\item The scalar components $\varphi_{p-1,\lambda}, \varphi^+_{p,\lambda}, \varphi^-_{p,\lambda} $ of $\Phi^{\tau_p}_{\sigma_p^\pm,\lambda}$    are given by
 \begin{align} 
 \varphi_{p-1,\lambda}(t)&=\phi_\lambda^{(\frac{n}{2},-\frac{1}{2})}(t), \label{phi3-sp1}\\
 \varphi_{p,\lambda,}^+(t)&=
 \frac{2n}{n+1}\phi_\lambda^{(\frac{n}{2}-1,-\frac{1}{2})}(t)
 -\frac{n-1}{n+1}\cosh(t)\phi_\lambda^{(n/2,-1/2)}(t)
 \pm\frac{i2\lambda}{n+1}\sinh(t) \phi_\lambda^{(\frac{n}{2},-\frac{1}{2})}(t),  \label{phi4-sp1}\\
\varphi_{p,\lambda}^-(t)&=
\frac{2n}{n+1}\phi_\lambda^{(\frac{n}{2}-1,-\frac{1}{2})}(t)
-\frac{n-1}{n+1}\cosh(t)\phi_\lambda^{(\frac{n}{2},-\frac{1}{2})}(t)\mp\frac{i2\lambda}{n+1}\sinh(t)  \phi_\lambda^{(\frac{n}{2},-\frac{1}{2})}(t). \label{phi5-sp1}
 \end{align}

 \end{enumerate}

\item In the special case $p=\frac{n}{2}$,   the scalar component $\varphi_{p,\lambda}^\pm$ of $\Phi^{\tau_p^\pm}_{\sigma_p}$ is given by
 \begin{equation}\label{phi-sp2}
 	\varphi_{p,\lambda}^+(t)=\varphi_{p,\lambda}^-(t)=\cosh\left(\frac{t}{2}\right) \phi_{2\lambda}^{(\frac{n}{2}-1,\frac{n}{2}+1)}\left(\frac{t}{2}\right).
 \end{equation}

    \end{enumerate}
 
  Below we will give explicitly the scalar components of the generalized Harish-Chandra $c$-function  appearing in \eqref{decomp-c-1}.

\begin{proposition}\label{c-explicit}
Let \(\lambda\in\mathbb C\) such that \(\Re(i\lambda)>0\). For generic $p$, the generalized Harish-Chandra $c$-function is given by 
\begin{equation*}
\mathbf c(\lambda,\tau_p)=c_{p-1}(\lambda,p)  \mathrm{Id}_{\bigwedge^{p-1}\mathbb C^{n-1}}+c_p(\lambda,p)\mathrm{Id}_{\bigwedge^{p}\mathbb C^{n-1}},
\end{equation*}
where the scalar coefficients are explicitly given by
$$
c_{p-1}(\lambda,p)    
 =\frac{i\lambda- \rho+p-1}{i\lambda+\rho} c(\lambda),\qquad 
c_p(\lambda,p)   
 =\frac{i\lambda+ \rho- p}{i\lambda+\rho} c(\lambda),
$$
with  $$c(\lambda)=c_{\frac{n}{2}-1,-\frac{1}{2}}(\lambda)= 2^{\rho-i\lambda} \frac{\Gamma(i\lambda) \Gamma\left(\rho+\frac{1}{2}\right)}
{\Gamma\left(\frac{i\lambda +\rho}{2} \right) \Gamma\left(\frac{i\lambda +\rho+1}{2} \right)}.$$
\end{proposition}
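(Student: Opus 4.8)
The plan is to determine $\mathbf c(\lambda,p)$ by running the Eisenstein integral $\Phi_q^p(\lambda,\cdot)$ of \eqref{spherical} through the Fatou-type Theorem \ref{Fatou} and comparing the resulting leading term with the explicit Jacobi-function expressions \eqref{p1-component}--\eqref{p2-component}. Throughout, the hypothesis $\Re(i\lambda)>0$ ensures convergence of the integral \eqref{c-function} and validity of the Jacobi asymptotics \eqref{Jacobi}.

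First I would observe that the Eisenstein integral is itself a Poisson transform of an explicit smooth section. For $w\in V_{\tau_p}$ set $f_w(k)=c_{p,q}\,\pi_p^q(\tau_p(k)^{-1}w)$; since $\pi_p^q\in\mathrm{Hom}_M(V_{\tau_p},V_{\sigma_q})$ one has $f_w\in C^\infty(K/M;\sigma_q)$, and comparing with the compact-model expression of $\mathcal{P}_{q,\lambda}^p$ shows $\Phi_q^p(\lambda,g)\,w=\mathcal{P}_{q,\lambda}^p f_w(g)$ for every $g\in G$. Applying Theorem \ref{Fatou}$(i)$ to $f_w$ and evaluating at $k=e$, where $f_w(e)=c_{p,q}\,\pi_p^q(w)$, yields
\[
\lim_{t\to\infty}{\rm e}^{(\rho-i\lambda)t}\,\Phi_q^p(\lambda,a_t)=c_{p,q}^2\,\mathbf c(\lambda,p)\,\iota_q^p\pi_p^q \quad\text{in }\mathrm{End}(V_{\tau_p}).
\]

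Next I would isolate the relevant scalar component. By Remark \ref{remaaa}$(4)$ the operator $\iota_q^p\pi_p^q$ is the orthogonal projection of $V_{\tau_p}$ onto $\iota_q^p(V_{\sigma_q})$, and by \eqref{decomp-c} the $c$-function acts on that subspace by the scalar $c_q(\lambda,p)$; hence reading off the $\sigma_q$-component of the displayed identity — which is $\varphi_{q,q}(\lambda,t)$ in the notation of \eqref{p1-component}, \eqref{p2-component}, the complementary component tending to $0$ — gives $\lim_{t\to\infty}{\rm e}^{(\rho-i\lambda)t}\varphi_{q,q}(\lambda,t)=c_{p,q}^2\,c_q(\lambda,p)$. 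To evaluate the left-hand side I would use \eqref{Jacobi}--\eqref{simple}: since $\alpha+\beta+1=\rho$ for $(\alpha,\beta)=(\tfrac n2-1,-\tfrac12)$ whereas $\alpha+\beta+1=\rho+1$ for $(\tfrac n2,-\tfrac12)$, and $\cosh t\sim\tfrac12{\rm e}^{t}$, one gets ${\rm e}^{(\rho-i\lambda)t}\phi_\lambda^{(\frac n2-1,-\frac12)}(t)\to c_{\frac n2-1,-\frac12}(\lambda)$ and ${\rm e}^{(\rho-i\lambda)t}(\cosh t)\,\phi_\lambda^{(\frac n2,-\frac12)}(t)\to\tfrac12\,c_{\frac n2,-\frac12}(\lambda)$. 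Substituting \eqref{p1-component}, \eqref{p2-component} and the values of $c_{p,q}^2$ from \eqref{c-p-q} into the limit then gives
\[
c_p(\lambda,p)=c_{\frac n2-1,-\frac12}(\lambda)-\tfrac{p}{2n}\,c_{\frac n2,-\frac12}(\lambda),\qquad c_{p-1}(\lambda,p)=c_{\frac n2-1,-\frac12}(\lambda)-\tfrac{n-p}{2n}\,c_{\frac n2,-\frac12}(\lambda).
\]
Finally, a short manipulation with $\Gamma(z+1)=z\Gamma(z)$ and $\rho+\tfrac12=\tfrac n2$ identifies $c_{\frac n2-1,-\frac12}(\lambda)=c(\lambda)$ and $c_{\frac n2,-\frac12}(\lambda)=\frac{2n}{i\lambda+\rho}\,c(\lambda)$; inserting these and using $\rho_p=\rho-p$, $\rho_{p-1}=\rho-p+1$ (so that $\rho-n+p=-\rho_{p-1}$) produces the two claimed closed forms.

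The main obstacle is the bookkeeping in the first two steps: identifying $\Phi_q^p(\lambda,\cdot)$ with a Poisson transform, carrying the normalizing constants $c_{p,q}$ of \eqref{c-p-q} through correctly, and matching the abstract $\mathrm{End}$-valued limit $c_{p,q}^2\,c_q(\lambda,p)\,\iota_q^p\pi_p^q$ against the correct scalar component $\varphi_{q,q}$ of the Eisenstein integral. Once that is in place the asymptotic analysis and the Gamma-function computations are routine.
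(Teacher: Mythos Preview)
Your proposal is correct and follows essentially the same route as the paper: identify the Eisenstein integral $\Phi_q^p(\lambda,\cdot)$ as the Poisson transform of $k\mapsto c_{p,q}\,\pi_p^q(\tau_p(k)^{-1}\,\cdot\,)$, apply the Fatou-type Theorem \ref{Fatou} to obtain $\lim_{t\to\infty}{\rm e}^{(\rho-i\lambda)t}\Phi_q^p(\lambda,a_t)=c_{p,q}^2\,\mathbf c(\lambda,p)\,\iota_q^p\pi_p^q$, and then read off $c_q(\lambda,p)$ from the $\sigma_q$-component $\varphi_{q,q}$ via the Jacobi asymptotics \eqref{Jacobi} together with the relation $c_{\frac n2,-\frac12}(\lambda)=\tfrac{2n}{i\lambda+\rho}\,c_{\frac n2-1,-\frac12}(\lambda)$. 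The only cosmetic difference is that you record the intermediate expressions $c_q(\lambda,p)=c_{\frac n2-1,-\frac12}(\lambda)-\tfrac{\ast}{2n}\,c_{\frac n2,-\frac12}(\lambda)$ before simplifying, whereas the paper substitutes the relation between the two Jacobi $c$-functions immediately.
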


\begin{proof} Assume that $p$ is generic and $q=p-1, p$. For  \(\lambda\in\mathbb C\) such that \(\Re(i\lambda)>0\), we have 
$$   \Phi^p_{q, \lambda}( ka_t)= d_{p,q} \mathcal P_{q,\lambda}^p\left( \boldsymbol\pi_p^q(\tau_p(k^{-1}))\right)(a_t).$$
  Proposition \ref{Fatou}  implies
\begin{equation}\label{sph1}
  \Phi^p_{q, \lambda}( a_t)=d^2_{p,q} \mathbf c(\lambda,\tau_p){\rm e}^{(i\lambda-\rho)t}\left( \boldsymbol\pi^q_p+{\bf o}(1)\right)\; 
  \text{ as } t\rightarrow \infty,
\end{equation} 
with
$$
d_{p,q}^2= 
 \begin{cases}
 {\frac{n}{n-p}} & \text{if}\; q=p,\\
 {\frac{n}{p}} & \text{if}\; q=p-1.
\end{cases}
$$
Let us first consider the case \(q=p\). 
Using the asymptotic behavior \autoref{Jacobi} of   Jacobi functions  together with the relation 
$$c_{\frac{n}{2},-\frac{1}{2}}(\lambda)=\frac{2n}{i\lambda+\rho}c_{\frac{n}{2}-1,-\frac{1}{2}}(\lambda),$$ 
 we obtain
\begin{eqnarray*} 
 \varphi_{p,p}(\lambda,t)&  {=}& \frac{1}{n-p}{\rm e}^{(i\lambda-\rho)t}
 \left( n c_{\frac{n}{2}-1,-\frac{1}{2}}(\lambda)-\frac{p}{2} c_{\frac{n}{2},-\frac{1}{2}}(\lambda)+{\bf o}(1)\right)    \text{ as } t\rightarrow \infty \\
 & {=}&{\rm e}^{(i\lambda-\rho)t}\frac{n}{n-p}c_{\frac{n}{2}-1,-\frac{1}{2}}(\lambda)\left(\frac{i\lambda+\rho-p}{i\lambda+\rho}+ {\bf o}(1)\right)    \text{ as } t\rightarrow \infty. 
\end{eqnarray*}
Similarly, we get 
 $$   \varphi_{p,p-1}(\lambda,t) {=} {\rm e}^{(i\lambda-\rho-1)t} \left(c_{\frac{n}{2},-\frac{1}{2}}(\lambda)+ {\bf o}(1)\right)   \text{ as } t\rightarrow \infty.$$ 
Thus 
 \begin{multline*} 
  \Phi^{\tau_p}_{\sigma_p,\lambda}( a_t)={\rm e}^{(i\lambda-\rho-1)t} \left(c_{\frac{n}{2},-\frac{1}{2}}(\lambda)+ {\bf o}(1)\right)  \mathrm{Id}_{\bigwedge^{p-1}\mathbb C^{n-1}}\\
 +{\rm e}^{(i\lambda-\rho)t}\frac{n}{n-p}c_{\frac{n}{2}-1,-\frac{1}{2}}(\lambda)\left(\frac{i\lambda+\rho-p}{i\lambda+\rho}+ {\bf o}(1)\right) \mathrm{Id}_{\bigwedge^{p}\mathbb C^{n-1}} ,
  \end{multline*}
from which we deduce that 
\begin{equation}\label{sph2}
\lim_{t\rightarrow \infty} {\rm e}^{(\rho-i\lambda)t}   \Phi^{\tau_p}_{\sigma_p,\lambda}(a_t)=
\frac{n}{n-p} \left( \frac{i\lambda+\rho-p}{i\lambda+\rho}\right) c_{\frac{n}{2}-1,-\frac{1}{2}}(\lambda)\mathrm{Id}_{\bigwedge^{p}\mathbb C^{n-1}}.
\end{equation}
Finally, by identification of \autoref{sph1} and \autoref{sph2} it follows that  
 $$ c_p(\lambda,p)= \frac{i\lambda+\rho-p}{i\lambda+\rho}c_{\frac{n}{2}-1,-\frac{1}{2}}(\lambda)= \frac{i\lambda+\rho-p}{i\lambda+\rho}c(\lambda).$$
Similarly, for $q=p-1$ we can prove that 
\[\lim_{t\rightarrow \infty} {\rm e}^{(i\lambda-\rho)t}   \Phi^{\tau_p}_{\sigma_{p-1},\lambda}(a_t)=\frac{n}{p} 
\left( \frac{i\lambda-\rho+p-1}{i\lambda+\rho}\right) c_{\frac{n}{2}-1,-\frac{1}{2}}(\lambda)  \mathrm{Id}_{\bigwedge^{p-1}\mathbb C^{n-1}},\] 
from which we deduce that
$$c_{p-1}(\lambda,p)=\frac{i\lambda-\rho+p-1}{i\lambda+\rho}c_{\frac{n}{2}-1,-\frac{1}{2}}(\lambda) = \frac{i\lambda-\rho+p-1}{i\lambda+\rho} c(\lambda).$$
\end{proof}
 
 The scalar components   appearing in \eqref{decomp-c-2-1}  and \eqref{decomp-c-3-1} are given below.
\begin{proposition}\label{c-explicit-special}
Let \(\lambda\in\mathbb C\) such that \(\Re(i\lambda)>0\). In the special cases, the generalized Harish-Chandra $c$-function is given as follows:
  \begin{enumerate}[\upshape (1)]  
  \item For  $p=\frac{n-1}{2}$ (when $n$ is odd), we have 
 \begin{multline*}
 \label{decomp-c-2} \mathbf c(\lambda,\tau_{{n-1}\over 2})= 
c_{ {{n-3}\over 2}}(\lambda, {{n-1}\over 2})  \mathrm{Id}_{\bigwedge^{{n-3}\over  2}\mathbb C^{n-1}}+\\
 c_{ {{n-1}\over 2}}^+(\lambda, {{n-1}\over 2})\mathrm{Id}_{\bigwedge_+^{{n-1}\over 2}\mathbb C^{n-1}}
 +c_{       {{n-1}\over 2}}^-(\lambda,\ {{n-1}\over 2})\mathrm{Id}_{\bigwedge_-^{{{n-1}\over 2}}\mathbb C^{n-1}}   
\end{multline*}
where the scalar coefficients are explicitly given by  
\begin{equation*}
c_{{n-3}\over 2}(\lambda,{{n-1}\over 2})    
 =\frac{2n}{n-1}\frac{i\lambda- 1}{i\lambda+\rho} c(\lambda),
\qquad
c_{ {{n-1}\over 2}}^\pm(\lambda,{{n-1}\over 2})   
 =\frac{2n}{n+1}\frac{i\lambda}{i\lambda+\rho} c(\lambda),
\end{equation*}
with  $$c(\lambda)=c_{\frac{n}{2}-1,-\frac{1}{2}}(\lambda)= 2^{\rho-i\lambda} \frac{\Gamma(i\lambda) \Gamma\left(\rho+\frac{1}{2}\right)}
{\Gamma\left(\frac{i\lambda +\rho}{2} \right) \Gamma\left(\frac{i\lambda +\rho+1}{2} \right)}.$$

\item For $p=\frac{n}{2}$ (when $n$ is even), we have 
 \begin{equation*}\label{decomp-c-3}
   \mathbf c (\lambda,\tau_{n\over 2}^\pm)=c_{ {n\over 2}}^\pm (\lambda,  {n\over 2})\mathrm{Id}_{\bigwedge_\pm^{n\over 2}\mathbb C^{n}} . \end{equation*}
\end{enumerate}

where the scalar coefficient is given by
$$
c_{n\over 2}^\pm(\lambda,{n\over 2})=c_{\frac{n}{2}-1, \frac{n}{2}+1}(2 \lambda)
=
2^{n+1-2i \lambda} \frac{\Gamma\left(\frac{n}{2}\right) \Gamma(2i \lambda)}{\Gamma\left(\frac{2i \lambda+n+1}{2}\right) \Gamma\left(\frac{2i \lambda-1}{2}\right)} .
$$
\end{proposition}
\begin{proof}
	The proof is similar to the generic case.
\end{proof}

\section{The $L^2$-range of the Poisson transform}\label{sec5}
Recall that our primary objective is to describe the image of the space   $L^r(K/M;\sigma_q)$ under the Poisson transform $\mathcal{P}_{q,\lambda}^p$, for $1< r<\infty$. To achieve this, we will first examine the case when   $r=2$.

Let  $\tau_p\in \{\tau_1,\cdots,\tau_{\frac{n-1}{2}}, {\tau^\pm_{\frac{n}{2}}}\}$ and $\sigma_q\in\widehat{M}(\tau_p)$ with $q=p, p-1$. 
Let $(\delta,V_\delta)$ be an element in $\widehat{K}(\sigma_q)$, where $\widehat K(\sigma_q)\subset \widehat K$ 
denotes the subset of those representations that include   $\sigma_q$ when restricted to $K$. By the branching law, $\sigma_q$ occurs in the restriction $\delta_{\big |M}$ with multiplicity one and therefore $\dim \mathrm{Hom}_M(V_\delta, V_{\sigma_q})=1$.  We choose    the orthogonal projection $P_\delta : V_\delta\to V_{\sigma_q}$ as   a generator of $\mathrm{Hom}_M(V_\delta, V_{\sigma_q})$.

let   $(v_j)_{j=1}^{d_\delta}$ be an orthonormal basis for $V_\delta$, where $d_\delta=\dim V_\delta$. Then  the functions 
$$k\mapsto \phi^\delta_j(k)=P_\delta(\delta(k^{-1})v_j), \; \;\; 1\leqslant j\leqslant d_\delta, \,\, \delta\in \widehat{K}(\sigma)$$ 
define an orthogonal basis of the space $L^2( K/M; \sigma_q)$, see, e.g. \cite{wallach}. 
  Thus, the  Fourier expansion of any  $ f\in L^2( K/M; \sigma_q)$ is given by   
\[f(k)=\sum_{\delta\in\widehat{K}(\sigma_q)}\sum_{j=1}^{d_\delta} a^\delta_{j} \phi^{\delta}_j(k),\]
with 
\begin{equation}\label{L2-norm}
\displaystyle \parallel f\parallel^2_{L^2(K/M;\, \sigma_q)}=\sum_{\delta\in\widehat{K}(\sigma_q)} \frac{d_{\sigma_q}} {d_\delta} \sum_{j=1}^{d_\delta}\mid a^\delta_{j}\mid^2.
\end{equation}

\begin{theorem}\label{th-cas-L2} Let $\tau=\tau_p\in\{\tau_1,\cdots,\tau_{\frac{n-1}{2}},   \tau^\pm_{\frac{n}{2}}\}$ and $\sigma=\sigma_q\in \widehat{M}(\tau)$ with $q=p-1, p$.
   Assume
  $\lambda\in \mathbb C$ such that 
  \begin{equation*} 
\begin{cases}
\Re(i\lambda)>0&\text{ if $q=p$},\\ 
 \Re(i\lambda)>0 \, \text{ and } \; i\lambda\not=\rho-p+1& \text{ if $q=p-1$}. 
 \end{cases}
 \end{equation*}
 The Poisson transform $\mathcal{P}_{\sigma,\lambda}^{\tau}$ is  a topological isomorphism
 of the space $L^2\left(K/M; {\sigma}\right)$ onto the space $\mathcal{E}^2_{\sigma,\lambda}(G/K ; {\tau})$.
  Moreover,  for every $f\in L^2\left(K/M;{\sigma}\right)$, 
  there exists a positive constant $\gamma_\lambda$ such that  
\begin{equation}\label{esti-F-2}
  | c_\sigma(\lambda,\tau)| \;\| f\|_{L^r\left(K/M; {\sigma}\right)}\leqslant \sqrt{\frac{\dim \sigma}{\dim \tau}} \;
 \|\mathcal{P}_{\sigma,\lambda}^{\tau} f\|_{2,\lambda} \leqslant    \gamma_\lambda \| f\|_{ L^2\left(K/M; {\sigma}\right)},
\end{equation}
 where $ | c_\sigma(\lambda,\tau)|$ are  given explicitly in Proposition \ref{c-explicit} and Proposition \ref{c-explicit-special}.
\end{theorem}

\begin{proof} 
We will present  the proof only when $p$ is generic; the special cases $p=(n-1)/2$ and $p=n/2$ are analogous and are left to the reader.

Assume $p$ generic and $q=p-1$ or $q=p$. \ Recall the notation  $\mathcal P_{q,\lambda}^p=\mathcal P_{\sigma_q,\lambda}^{\tau_p}$. From Proposition \ref{nec-cdt} it follows that  the right-hand side of the estimate \eqref{esti-F-2} holds and that  $\mathcal{P}_{q,\lambda}^p$ is a  an injective continuous map from $L^2(K/M;\sigma_q)$    into $\mathcal{E}^{2}_{\sigma_q,\lambda}(G/K ; \tau_p)$.
 
 On the other hand, 
for $F\in \mathcal{E}^{2}_{\sigma_q,\lambda}(G/K ; \tau_p)$, by   Theorem \ref{gaillard},  Corollary \ref{bijective1} and Corollary \ref{cor-olbrich}, there exists a hyperform  $f\in C^{-\omega}(K/M;\sigma_q)$  such that 
$F=\mathcal{P}_{q,\lambda}^p f$. We can write $f$ as 
$$
f(k)=\sum_{\delta\in\widehat{K}(\sigma_q)}  \sum_{j=1}^{d_\delta}a^\delta_{j}  P_\delta(\delta(k^{-1})v_j),
$$
 then 
\begin{align*}
F(g)=d_{p,q}\sum_{\delta\in\widehat{K}(\sigma_q)}  \sum_{j=1}^{d_\delta}a^\delta_{j}\int_K {\rm e}^{-(i\lambda+\rho)H(g^{-1}k)}\tau_p(\kappa(g^{-1}k)) \boldsymbol  \ib_q^p P_\delta(\delta(k^{-1})v_j){\rm d}k,
\end{align*} 
where $d_{p,q}=\sqrt{\frac{\dim \tau_p}{\dim \sigma_q}}.$
Define $\Phi_{\lambda,\delta}$ by
\begin{equation}\label{Eisen}
\Phi_{\lambda,\delta}(g)(v)= d_{p,q} \int_K {\rm e}^{-(i\lambda+\rho)H(g^{-1}k)}\tau_p(\kappa(g^{-1}k))\boldsymbol  \ib_q^p P_\delta(\delta(k^{-1}) v){\rm d}k,  
\end{equation}
for $g\in G$ and $v\in V_\delta$.
Clearly  \(\displaystyle \Phi_{\lambda,\delta}(k_1gk_2)=\tau_p(k_2^{-1})\Phi_{\lambda,\delta}(g)\delta(k_1^{-1})\) for every  \(g\in G\) and \(k_1,k_2\in K\).
Further
\begin{align*}
\int_K\langle F(ka_t),F(ka_t)\rangle_{\bigwedge^p\mathbb C^{n}}{\rm d}k
=  \sum_{\delta,\delta' }   \sum_{j, \ell}a_{j}^\delta\,\overline{a^{\delta'}_{\ell}}\int_K \langle \Phi_{\lambda,\delta}(ka_t)v_j,\Phi_{\lambda,\delta'}(ka_t)v_\ell\rangle_{\bigwedge^p\mathbb C^{n}}\, {\rm d}k.
\end{align*}
By the covariance property and Schur's lemma, we obtain
\begin{align*}
\int_k \langle \Phi_{\lambda,\delta}(ka_t)v_j,\Phi_{\lambda,\delta'}(ka)v_\ell\rangle_{\bigwedge^p\mathbb C^{n}}\, {\rm d}k &=\int_K \langle \Phi_{\lambda,\delta'}(a_t)^\ast\Phi_{\lambda,\delta}(a_t)\delta(k^{-1})v_j,\delta'(k^{-1})v_\ell\rangle_{V_\delta}\,{\rm d}k\\
&=
\begin{cases}
0&\text{if $\delta'\nsim\delta$} \\
\frac{1}{d_\delta} \tr\left(\Phi_{\lambda,\delta}(a_t)^\ast\Phi_{\lambda,\delta}(a_t)\right)\langle v_j,v_\ell\rangle_{V_\delta} & \text{otherwise}
\end{cases}
\end{align*}
Thus
\begin{eqnarray*}
\int_K\parallel F(ka_t)\parallel_{\bigwedge^p\mathbb C^n}^2{\rm d}k
&=&   \sum_{\delta\in \widehat{K}(\sigma)}   \frac{1}{d_\delta}\sum_{j=1}^{d_\delta} |a_{j}^\delta|^2  \tr\left(\Phi_{\lambda,\delta}(a_t)^\ast\Phi_{\lambda,\delta}(a_t)\right),\\
&=  &\sum_{\delta } \frac{1}{d_\delta}   \|    \Phi_{\lambda,\delta}(a_t) \|_{\rm{HS}}^2 \sum_j   |a^\delta_{j}|^2,
\end{eqnarray*}
where $\|\cdot \|_{\rm{HS}}$ is the Hilbert-Schmidt norm. Hence, for a finite subset $\Omega\subset \widehat K(\sigma_q)$ we get
\begin{eqnarray*}
 \sum_{\delta\in \Omega}  \frac{1}{d_\delta} \sum_j   \|   a^\delta_{j} {\rm e}^{(\rho-i\lambda)t} \Phi_{\lambda,\delta}(a_t) \|_{\rm{HS}}^2
 & \leqslant &\displaystyle \sup_{t>0}{\rm e}^{2(\rho-\Re(i\lambda))t}\int_K\parallel F(ka_t)\parallel_{\bigwedge^p\mathbb C^n}^2{\rm d}k,\\
 &=&\|F\|^2_{ {2,\lambda}}.
 \end{eqnarray*}
Under the assumption $\Re(i\lambda)>0,$ we may use Proposition  \ref{Fatou}, {\it i.e.},
\begin{equation}\label{esti}
\lim_{t\rightarrow \infty}{\rm e}^{(\rho-i\lambda)t}\Phi_{\lambda,\delta}(a_t)= d_{p,q}\mathbf c(\lambda,\tau_p)\boldsymbol \ib_q^p P_\delta,
\end{equation}
and  \autoref{lemma-c-f-2} to obtain
$$  d_{p,q}^2|c_q(\lambda,p)|^2\sum_{\delta\in \Omega}  \frac{1}{d_\delta} 
 \sum_j    \|   a^\delta_{j} P_\delta   \|_{\rm{HS}}^2 \leqslant \|F\|^2_{ {2,\lambda}}.
 $$
 That is
 $$  d_{p,q}^2 |c_q(\lambda,p)|^2\sum_{\delta\in \Omega}  \frac{1}{d_\delta} 
 \sum_j   d_{\sigma_q}   |  a^\delta_{j}|^2   \leqslant \|F\|^2_{{2,\lambda}}.
 $$
Since the subset $\Omega\subset \widehat K(\sigma_q)$ is arbitrary, it follows that  
\begin{align*}
d_{p,q}^2 |c_q(\lambda,p)|^2 \sum_{\delta\in \widehat{K}(\sigma_q)}\frac{d_{\sigma_q}}{d_\delta}\sum_{j}\mid a_{j}^\delta\mid^2 \leqslant \parallel F\parallel^2_{ {2,\lambda}}<\infty.
\end{align*}
This shows that  \(f\in L^2(K/M;\sigma_q)\)  with    
$$ d_{p,q} |c_q(\lambda,p)| \parallel f\parallel_{L^2(K/M;\,\sigma_q)}\leqslant \parallel \mathcal{P}_{q,\lambda}^p f\parallel_{ {2,\lambda}}.$$  
\end{proof}

\begin{lemma}\label{lemma-lim-sum} 
For  any $\lambda\in\mathbb C$ such that $\Re(i\lambda)>0$ 
we have 
 \begin{equation*}
   \sup _{t>0} {\rm e}^{(\rho-\Re(i\lambda))t} \|\Phi_{\lambda,\delta} (a_t)\|_{\rm{HS}} \leqslant \gamma_\lambda d_{p,q}  \| P_\delta\|_{\rm{HS}}= \gamma_\lambda d_{p,q}  \sqrt{d_{\sigma_q}}.
   \end{equation*}
\end{lemma}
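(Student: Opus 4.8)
The plan is to recognize $\Phi_{\lambda,\delta}$ as the ``matrix of Poisson transforms'' of the elementary sections $\phi^\delta_j$, and then to feed this into the $L^2$-estimate \eqref{E1} of Proposition \ref{pro-gamma-lambda} applied with $r=2$. The only delicate point is to arrange the two averagings over $K$ so that the resulting bound ends up carrying the factor $d_\sigma=\|P_\delta\|_{\mathrm{HS}}^2$ rather than $d_\delta$; this is exactly the content of the stated equality $\|P_\delta\|_{\mathrm{HS}}=\sqrt{d_\sigma}$.

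First I would record, just as in the proof of Theorem \ref{cas-L2}, that for each $v\in V_\delta$ the $V_{\sigma_q}$-valued function $\phi^\delta_v\colon k\mapsto P_\delta(\delta(k^{-1})v)$ belongs to $C^\infty(K/M;\sigma_q)$ (because $P_\delta\in\mathrm{Hom}_M(V_\delta,V_{\sigma_q})$) and that, comparing \eqref{Eisen} with the compact form of the Poisson transform, $\Phi_{\lambda,\delta}(g)(v)=\mathcal P_{q,\lambda}^p\phi^\delta_v(g)$; in particular $\Phi_{\lambda,\delta}(a_t)v_j=\mathcal P_{q,\lambda}^p\phi^\delta_j(a_t)$ with $\phi^\delta_j=\phi^\delta_{v_j}$. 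Next I would use the covariance relation $\Phi_{\lambda,\delta}(ka_t)=\Phi_{\lambda,\delta}(a_t)\delta(k^{-1})$ together with the unitarity of $\delta$ to see that $\|\Phi_{\lambda,\delta}(ka_t)\|_{\mathrm{HS}}$ is independent of $k\in K$. Computing the Hilbert--Schmidt norm in the orthonormal basis $\{\delta(k^{-1})v_j\}_{j}$ of $V_\delta$ and integrating over $K$ gives
\begin{equation*}
\|\Phi_{\lambda,\delta}(a_t)\|_{\mathrm{HS}}^2=\int_K\sum_{j=1}^{d_\delta}\big\|\Phi_{\lambda,\delta}(a_t)\delta(k^{-1})v_j\big\|_{\Lambda^p\mathbb C^n}^2\,{\rm d}k=\sum_{j=1}^{d_\delta}\int_K\big\|\mathcal P_{q,\lambda}^p\phi^\delta_j(ka_t)\big\|_{\Lambda^p\mathbb C^n}^2\,{\rm d}k .
\end{equation*}
Applying \eqref{E1} with $r=2$ to each $\phi^\delta_j$, summing over $j$, and using that
\begin{equation*}
\sum_{j=1}^{d_\delta}\|\phi^\delta_j\|_{L^2(K/M;\,\sigma_q)}^2=\int_K\sum_{j=1}^{d_\delta}\big\|P_\delta(\delta(k^{-1})v_j)\big\|_{\Lambda^q\mathbb C^{n-1}}^2\,{\rm d}k=\|P_\delta\|_{\mathrm{HS}}^2=d_\sigma ,
\end{equation*}
I would obtain $\|\Phi_{\lambda,\delta}(a_t)\|_{\mathrm{HS}}^2\le\gamma_\lambda^2\,c_{p,q}^2\,d_\sigma\,{\rm e}^{2(\Re(i\lambda)-\rho)t}$, hence ${\rm e}^{(\rho-\Re(i\lambda))t}\|\Phi_{\lambda,\delta}(a_t)\|_{\mathrm{HS}}\le\gamma_\lambda c_{p,q}\sqrt{d_\sigma}$ for every $t>0$; taking the supremum over $t>0$ concludes.

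I do not anticipate a genuine obstacle here. The one thing to be careful about is that Proposition \ref{pro-gamma-lambda} controls the $L^2$-average over $K$ of $k\mapsto\mathcal P_{q,\lambda}^p f(ka_t)$ and not the single value at $a_t$, so it is the covariance of $\Phi_{\lambda,\delta}$ under left translation by $K$ that legitimizes passing between the two; and it is precisely this averaging over $K$, combined with the invariance of the Hilbert--Schmidt norm under a change of orthonormal basis (Schur), that replaces the ``naive'' factor $d_\delta$ by $d_\sigma=\|P_\delta\|_{\mathrm{HS}}^2$.
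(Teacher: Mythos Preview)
Your proof is correct and follows essentially the same line as the paper: identify $\Phi_{\lambda,\delta}(ka_t)v$ with $\mathcal P^p_{q,\lambda}\phi^\delta_v(ka_t)$, apply the $L^2$-estimate \eqref{E1}, and use the Schur/unitarity averaging so that $d_\sigma$ (not $d_\delta$) appears. The only cosmetic difference is that the paper works with a single vector $v$ and obtains $\frac{1}{d_\delta}\|\Phi_{\lambda,\delta}(a_t)\|_{\mathrm{HS}}^2\|v\|^2$ on one side and $\frac{d_\sigma}{d_\delta}\|v\|^2$ on the other via Schur's lemma, whereas you sum over an orthonormal basis from the start; the cancellation of $d_\delta$ is the same either way.
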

\begin{proof}
Let $v\in V_{\tau_p}$. By  Proposition \ref{pro-gamma-lambda} we have    $$\sup _{t>0} {\rm e}^{(\rho-\Re(i\lambda))t} 
   \left(
   \int_K \|\mathcal{P}_{q,\lambda}^p (P_\delta(\delta^{-1}(\cdot)v))( ka_t) \|^2_{V_{\tau_p}  } {\rm d}k\right)^{1/2}\leqslant \gamma_\lambda d_{p,q}    
   \|P_\delta(\delta^{-1}(\cdot) v)\|_{L^2(K/M;\,\sigma_q)}.$$
Since $\mathcal{P}_{q,\lambda}^p (P_\delta(\delta^{-1}(\cdot)v))( ka_t)=\Phi_{\lambda,\delta}(ka_t)(v)$, we get
   \begin{eqnarray*}
    \int_K \|\mathcal{P}_{q,\lambda}^p (P_\delta(\delta^{-1}(\cdot)v)( ka_t)) \|^2_{V_{\tau_p}  } {\rm d}k
    &=& \int_K \langle \Phi_{\lambda,\delta} (a_t) \delta(k^{-1}) v ,  \Phi_{\lambda,\delta} (a_t) \delta(k^{-1}) v \rangle_{V_{\tau_p}  }  \, {\rm d}k,\\
    &=&\frac{1}{d_\delta} \tr \left( \Phi_{\lambda,\delta} (a_t)^*  \Phi_{\lambda,\delta} (a_t)\right) \|v\|_{V_\delta}^2,\\
    &=&\frac{1}{d_\delta} \| \Phi_{\lambda,\delta} (a_t)\|^2_{\rm{HS}} \|v\|_{V_\delta}^2.\\
   \end{eqnarray*}
 Now the desired inequality follows from
   $$    \|P_\delta(\delta^{-1}(\cdot) v)\|^2_{L^2(K/M;\,\sigma_q)} =\frac{d_{\sigma_q}}{d_\delta} \, \| v\|^2_{V_\delta}.$$
 \end{proof}

\begin{lemma}\label{key-lemma}   
For any $\delta\in\widehat{K}(\sigma_q)$ and any $\lambda\in\mathbb C$ such that $\Re(i\lambda)>0$ 
we have 
\begin{equation*}
\lim_{t\to\infty}{\rm e}^{2(\rho-\Re(i\lambda))t} \|\Phi_{\lambda,\delta}(a_t)\|^2_{\rm HS}=d_{p,q}^2 |c_q(\lambda,p)|^2 d_{\sigma_q}.
\end{equation*}
\end{lemma}
 \begin{proof}  
Recall that $\Phi_{\lambda,\delta}(a_t)= \mathcal{P}_{q,\lambda}^p (P_\delta(\delta^{-1}(\cdot)))(a_t)$. Then
 \begin{eqnarray*}
 {e}^{2(\rho-\Re(i\lambda))t}\| \Phi_{\lambda,\delta}(a_t) \|^2_{\rm HS}
 &=&\sum_{j=1}^{d_\delta} \| {e}^{(\rho-\Re(i\lambda))t}  \Phi_{\lambda,\delta}(a_t)v_j \|^2_{V_{\tau_p} },\\
 &=&\sum_{j=1}^{d_\delta} 
 \| {e}^{(\rho-\Re(i\lambda))t} 
  \mathcal{P}_{q,\lambda}^p(P_\delta (\delta^{-1}(\cdot) v_j))(a_t)   \|^2_{V_{\tau_p}  }.
 \end{eqnarray*}
Using Proposition  \ref{Fatou} and \autoref{lemma-c-f-2}, we obtain
 \begin{equation*}
 \begin{split}
 \lim_{t\to\infty}{e}^{2(\rho-\Re(i\lambda))t}\| \Phi_{\lambda,\delta}(a_t) \|^2_{\rm HS}
 &= d_{p,q}^2 \sum_{j=1}^{d_\delta} \langle \mathbf c(\lambda,\tau_p) \boldsymbol \ib_q^p P_\delta v_j,  \mathbf c(\lambda,\tau_p) \boldsymbol \ib^p_q P_\delta v_j \rangle_{V_{\tau_p}}.\\
 &=d_{p,q}^2 \|   \mathbf c(\lambda,\tau_p) \boldsymbol \ib_q^p P_\delta\|_{\rm HS} \\
  &= d_{p,q}^2  |c_{q}(\lambda, p)|^2  d_{\sigma_q}.
  \end{split}
 \end{equation*}
 \end{proof}

\begin{theorem}[Inversion formula]\label{inversion} Let $\tau_p\in\{\tau_1,\cdots,\tau_{\frac{n-1}{2}},   \tau^\pm_{\frac{n}{2}}\}$ and $\sigma_q\in \widehat{M}(\tau_p)$.
Assume   $\lambda\in \mathbb C$ such that 
\begin{equation*} 
\begin{cases}
\Re(i\lambda)>0&\text{ if $q=p$},\\ 
 \Re(i\lambda)>0 \, \text{ and } \; i\lambda\not=\rho-p+1& \text{ if $q=p-1$}. 
 \end{cases}
 \end{equation*}%
Let $F\in \mathcal{E}^{2}_{\sigma_q,\lambda}(G/K ; \tau_p)$ and let $f\in L^2(K/M;\sigma_q)$ be its boundary value. Then the following inversion formula holds in $L^2(K/M;\sigma_q)$
\begin{equation*}
f(k) = {d_{p,q}^{-1}} |c_q(\lambda,p)|^{-2}   \, \lim_{t\to\infty} e^{2(\rho- \Re(i\lambda) )t} \boldsymbol\pi_p^q \left(\int_K P_{p,\lambda}(ha_t,k)^* F(ha_t)\, {\rm d}h\right),
\end{equation*} 
  where $P_{p,\lambda}$ is the Poisson kernel given by 
 $P_{p, \lambda} (g, k)=\mathrm{e}^{-(i \lambda+\rho) H\left(g^{-1} k\right)} \tau_p\left(\kappa\left(g^{-1} k\right)\right)$. 
  \end{theorem}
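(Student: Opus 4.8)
The plan is to combine Theorem~\ref{cas-L2} with the Fourier expansion of the boundary value in the orthogonal basis $\{\phi^\delta_j\}$ of $L^2(K/M;\sigma_q)$, reduce the right-hand side to an asymptotic statement along $A$, and pass to the limit term by term using the uniform estimates already established. First, by Theorem~\ref{cas-L2}, $F=\mathcal{P}^p_{q,\lambda}f$ with $f\in L^2(K/M;\sigma_q)$ unique; expanding $f=\sum_{\delta\in\widehat{K}(\sigma)}\sum_{j=1}^{d_\delta}a^\delta_j\phi^\delta_j$, the proof of Theorem~\ref{cas-L2} gives $F(g)=\sum_{\delta,j}a^\delta_j\,\Phi_{\lambda,\delta}(g)v_j$ with $\Phi_{\lambda,\delta}$ the Eisenstein integral \eqref{Eisen}, the series converging in $C^\infty(G/K;\tau_p)$ and in $\|\cdot\|_{\mathcal{E}^2_{q,\lambda}}$. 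Writing $f_\delta=\sum_j a^\delta_j\phi^\delta_j$ we have $f=\sum_\delta f_\delta$ and $\|f\|^2_{L^2}=\sum_\delta\|f_\delta\|^2_{L^2}$.

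Next I would use the left $K$-covariance $P^p_{q,\lambda}(ha_t,k)=P^p_{q,\lambda}(a_t,h^{-1}k)$ of the Poisson kernel \eqref{Poisson-ker} and the covariance $\Phi_{\lambda,\delta}(ha_t)=\Phi_{\lambda,\delta}(a_t)\delta(h^{-1})$ of \eqref{Eisen}: the change of variable $w=h^{-1}k$ gives, for $G_t(k):=\pi_p^q\int_K P^p_{q,\lambda}(ha_t,k)^*F(ha_t)\,{\rm d}h$,
\begin{equation*}
G_t(k)=\sum_{\delta,j}a^\delta_j\,A^\delta_t\,\delta(k^{-1})v_j,\qquad A^\delta_t:=\pi_p^q\int_K P^p_{q,\lambda}(a_t,w)^*\,\Phi_{\lambda,\delta}(a_t)\,\delta(w)\,{\rm d}w\in\Hom(V_\delta,V_{\sigma_q}).
\end{equation*}
Since $\pi_p^q\in\Hom_M(V_{\tau_p},V_{\sigma_q})$ and $P^p_{q,\lambda}(a_t,wm^{-1})=P^p_{q,\lambda}(a_t,w)\tau_p(m^{-1})$ for $m\in M$ (using $\kappa(gm)=\kappa(g)m$, $H(gm)=H(g)$), a short computation shows $A^\delta_t\in\Hom_M(V_\delta,V_{\sigma_q})$; this space is one-dimensional, spanned by $P_\delta$, so $A^\delta_t=\mu_\delta(t)\,P_\delta$ for a scalar $\mu_\delta(t)$. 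As $P_\delta\,\delta(k^{-1})v_j=\phi^\delta_j(k)$, this yields $G_t=\sum_{\delta\in\widehat{K}(\sigma)}\mu_\delta(t)\,f_\delta$ in $L^2(K/M;\sigma_q)$, so the theorem reduces to the asymptotics of the scalars $\mu_\delta(t)$.

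The analytic heart is then to show $\lim_{t\to\infty}{\rm e}^{2(\rho-\Re(i\lambda))t}\mu_\delta(t)=c_{p,q}\,|c_q(\lambda,p)|^2$. One first checks, from \eqref{Poisson-ker} and $P^p_{q,\lambda}(kg,u)=P^p_{q,\lambda}(g,k^{-1}u)$, that $\Phi_{\lambda,\delta}(a_t)\delta(w)v=\mathcal{P}^p_{q,\lambda}(\phi^\delta_v)(w^{-1}a_t)$ with $\phi^\delta_v(k)=P_\delta(\delta(k^{-1})v)$; hence Theorem~\ref{Fatou}(i), which is uniform in the $K$-variable, gives $\Phi_{\lambda,\delta}(a_t)\delta(w)={\rm e}^{(i\lambda-\rho)t}\big(c_{p,q}\,\mathbf c(\lambda,p)\iota_q^p P_\delta\,\delta(w)+\varepsilon_t(w)\big)$ with $\sup_{w\in K}\|\varepsilon_t(w)\|\to0$. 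Inserting this into $A^\delta_t$, the error contributes $O\big({\rm e}^{(\Re(i\lambda)-\rho)t}\int_K{\rm e}^{-(\Re(i\lambda)+\rho)H(a_t^{-1}w)}{\rm d}w\big)\cdot\sup_w\|\varepsilon_t(w)\|={\rm e}^{-2(\rho-\Re(i\lambda))t}o(1)$ (the integral being estimated as in the proof of Proposition~\ref{pro-gamma-lambda}), while the main term equals $c_{p,q}\,{\rm e}^{(i\lambda-\rho)t}\,\pi_p^q\int_K P^p_{q,\lambda}(a_t,w)^*\,\mathbf c(\lambda,p)\iota_q^p P_\delta\,\delta(w)\,{\rm d}w$. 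To this last integral I would apply the conjugate--transpose version of Theorem~\ref{Fatou}(i) — valid for any continuous function on $K$ and proved by the same concentration-at-$e$ argument, the relevant $c$-function being $\int_{\bar N}{\rm e}^{-(\overline{i\lambda}+\rho)H(\bar n)}\tau_p(\kappa(\bar n))^{-1}{\rm d}\bar n=\mathbf c(\lambda,p)^*$ — which produces ${\rm e}^{(\overline{i\lambda}-\rho)t}\big(\mathbf c(\lambda,p)^*\mathbf c(\lambda,p)\iota_q^p P_\delta+o(1)\big)$. Multiplying the two factors, the normalizations combine into ${\rm e}^{(i\lambda-\rho)t}{\rm e}^{(\overline{i\lambda}-\rho)t}={\rm e}^{-2(\rho-\Re(i\lambda))t}$, and by the identity $\pi_p^q\,\mathbf c(\lambda,p)^*\mathbf c(\lambda,p)\,\iota_q^p=|c_q(\lambda,p)|^2\,\mathrm{Id}_{V_{\sigma_q}}$ established in the proof of Lemma~\ref{lemma-c-f} (via \eqref{decomp-c}) one gets ${\rm e}^{2(\rho-\Re(i\lambda))t}A^\delta_t\to c_{p,q}|c_q(\lambda,p)|^2\,P_\delta$, hence the claimed limit. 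The hypotheses on $\lambda$ enter precisely here: by Proposition~\ref{c-explicit} they force $c_q(\lambda,p)\neq0$, so that the factor $c_{p,q}^{-1}|c_q(\lambda,p)|^{-2}$ in the statement is meaningful.

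Finally I would pass the limit inside the sum. The crude bound $\|A^\delta_t\|_{\mathrm{HS}}\le\|\Phi_{\lambda,\delta}(a_t)\|_{\mathrm{HS}}\int_K{\rm e}^{-(\Re(i\lambda)+\rho)H(a_t^{-1}w)}{\rm d}w$, together with Lemma~\ref{lemma-lim-sum} (which is uniform in $\delta$) and the estimate of the last integral from the proof of Proposition~\ref{pro-gamma-lambda}, gives ${\rm e}^{2(\rho-\Re(i\lambda))t}|\mu_\delta(t)|\le C$ with $C$ independent of $\delta$ and of $t>0$. Since $\sum_\delta\|f_\delta\|^2_{L^2}=\|f\|^2_{L^2}<\infty$, dominated convergence in $\ell^2$ yields $\big\|{\rm e}^{2(\rho-\Re(i\lambda))t}G_t-c_{p,q}|c_q(\lambda,p)|^2 f\big\|^2_{L^2}=\sum_\delta\big|{\rm e}^{2(\rho-\Re(i\lambda))t}\mu_\delta(t)-c_{p,q}|c_q(\lambda,p)|^2\big|^2\|f_\delta\|^2_{L^2}\longrightarrow0$ as $t\to\infty$, which is the inversion formula after division by $c_{p,q}|c_q(\lambda,p)|^2$. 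The step I expect to be the main obstacle is the asymptotic evaluation of $\mu_\delta(t)$ in the third paragraph: $P^p_{q,\lambda}(a_t,w)^*$ is not literally the Poisson kernel at $\overline\lambda$ — it carries $\tau_p(\kappa)^{-1}$ rather than $\tau_p(\kappa)$ — so the second application of the Fatou theorem must be set up for the conjugate--transpose Poisson transform (or re-derived along the lines of the proof of Proposition~\ref{c-explicit}), and the $M$-equivariance must be tracked carefully so that the two $c$-function factors pair into $|c_q(\lambda,p)|^2$ on $V_{\sigma_q}$ rather than mixing the $\sigma_{p-1}$- and $\sigma_p$-components.
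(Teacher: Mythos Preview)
Your proof is correct and follows the same architecture as the paper: expand $F=\mathcal P^p_{q,\lambda}f$ along the Peter--Weyl basis, reduce the right-hand side of the formula to $G_t=\sum_\delta \mu_\delta(t)f_\delta$ by Schur's lemma, prove the termwise limit ${\rm e}^{2(\rho-\Re(i\lambda))t}\mu_\delta(t)\to c_{p,q}|c_q(\lambda,p)|^2$, and pass to the $L^2$-limit by dominated convergence using the uniform bound from Lemma~\ref{lemma-lim-sum}. Your identification $A^\delta_t=\mu_\delta(t)P_\delta$ via $M$-equivariance is precisely what the paper obtains (in disguise) when computing the Fourier coefficients $c^\delta_j(g_t)$.

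The one place where your route genuinely differs from the paper's is the evaluation of $\mu_\delta(t)$. You propose a double application of the Fatou theorem, the second of which needs the conjugate--transpose variant; as you correctly flag, this is not in the paper and has to be set up separately. The paper avoids this extra step by a duality trick: moving $(\pi_p^q)^*=\iota_q^p$ across the inner product and using the identity $\int_K P^p_{q,\lambda}(ha_t,k)\,\iota_q^p P_\delta\delta(k^{-1})\,{\rm d}k=c_{p,q}^{-1}\Phi_{\lambda,\delta}(ha_t)$ (which is just the definition \eqref{Eisen}), one finds directly that
\[
\mu_\delta(t)=\frac{1}{c_{p,q}\,d_\sigma}\,\|\Phi_{\lambda,\delta}(a_t)\|_{\mathrm{HS}}^2,
\]
and then Lemma~\ref{key-lemma} (a single application of Theorem~\ref{Fatou}) gives the limit. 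The paper then concludes by combining weak convergence $(g_t,\varphi)\to(f,\varphi)$ (obtained by polarizing the norm limit $\lim_t{\rm e}^{2(\rho-\Re(i\lambda))t}\int_K\|F(ka_t)\|^2{\rm d}k=c_{p,q}^2|c_q(\lambda,p)|^2\|f\|^2$) with the norm convergence $\|g_t\|\to\|f\|$. Your dominated-convergence endgame is in fact a bit cleaner than the paper's weak-plus-norm argument, but for the asymptotic step the paper's self-adjoint rewriting is shorter and sidesteps the adjoint-Fatou issue you anticipated.
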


\begin{proof}
Let $F\in \mathcal{E}^{2}_{\sigma_q,\lambda}(G/K ; \tau_p)$. By Theorem 
\ref{th-cas-L2}, there exists a unique $f\in L^2(K/M;\sigma_q)$ such that $F=\mathcal{P}_{q,\lambda}^p f$. 
Write $$f(k)=\sum_{\delta\in\widehat K(\sigma_q)}  \sum_{j=1}^{d_\delta} a^\delta_{j}P_\delta (\delta(k^{-1}))v_j.$$ Then
\begin{eqnarray*}
F(ka_t)
&=& \sum_\delta \sum_{j} a^\delta_{j} \Phi_{\lambda,\delta} (a_t)\delta(k^{-1}) v_j,
\end{eqnarray*} 
and therefore
\begin{eqnarray*}
\int_K \| F(ka_t)\|_{V_{\tau_p}  }^2\, {\rm d}k
&=& 
\sum_\delta \sum_{j} \frac{|a^\delta_{j}|^2}{d_\delta} \| \Phi_{\lambda,\delta} (a_t) \|^2_{\rm HS}.
\end{eqnarray*}
From Lemma \ref{key-lemma} we deduce
 \begin{equation*}
\lim_{t\to\infty} {\rm e}^{2(\rho-\Re(i\lambda))t} \int_K
 \| \mathcal{P}_{q,\lambda}^p f(ka_t)\|_{V_{\tau_p}  }^2\,{\rm d}k=d_{p,q}^2|c_q(\lambda,p)|^2 \| f\|^2_{L^2(K/M;\sigma_q)},
 \end{equation*}
which implies
 $$\lim_{t\to\infty}(g_t, \varphi)_{L^2(K/M;\,\sigma_q)}= ( f,   \varphi)_{L^2(K/M;\,\sigma_q)},\quad \forall \varphi\in L^2(K/M;\sigma_q),$$
 where $g_t$ is the $V_{\sigma_q}$-valued function defined by 
 $$g_t(k)={d_{p,q}^{-1}}|c_q(\lambda,p)|^{-2} {\rm e}^{2(\rho-\Re(i\lambda))t}\boldsymbol \pi_p^q \int_K P_{p,\lambda} (ha_t,k)^* F(ha_t)\,{\rm d}h.$$
 To obtain the inversion formula, it is only required to show that 
 $$ \lim_{t\to \infty} \| g_t\|_{L^2(K/M;\,\sigma_q)}=\|f\|_{L^2(K/M;\,\sigma_q)}.$$
 To do so, let us first   compute the Fourier coefficients  $c_{j}^\delta(g_t)$ of $g_t$: 
 \begin{eqnarray*}
 c_{j}^\delta(g_t)
 &=&\frac{d_\delta}{d_{\sigma_q}}\int_K\langle g_t(k), P_\delta\delta(k^{-1})v_j\rangle_{V_{\sigma_q}  }\, {\rm d}k\\
 &=&d_{p,q}^{-1}|c_q(\lambda,p)|^{-2} {\rm e}^{2(\rho-\Re(i\lambda))t}\\
 &\times &\frac{d_\delta}{d_{\sigma_q}} \sum_{\delta',\ell} a^{\delta'}_{\ell}\int_K 
 \big\langle \boldsymbol\pi_q^p\int_K P_{p,\lambda} (ha_t,k)^*\Phi_{\lambda,\delta'}(a_t)\delta'(h^{-1})v_\ell {\rm d}h, P_\delta\delta(k^{-1})v_j
 \big\rangle_{V_{\sigma_q}  }\, {\rm d}k.
 \end{eqnarray*}
Since $(\boldsymbol\pi_p^q)^*=\boldsymbol \ib_q^p$, we get
  \begin{eqnarray*}
 c_{j}^\delta(g_t)&=&   d_{p,q}^{-1}|c_q(\lambda,p)|^{-2}   {\rm e}^{2(\rho-\Re(i\lambda))t}\\
 &\times &\frac{d_\delta}{d_{\sigma_q}} \sum_{\delta',\ell} a^{\delta'}_{\ell}
 \int_K  \int_K
  \big\langle   \Phi_{\lambda,\delta'}(a_t)\delta'(h^{-1})v_\ell  ,P_{p,\lambda}(ha_t,k) \boldsymbol \ib_q^p P_\delta\delta(k^{-1})v_j
  \big\rangle_{V_{\tau_p}  }\,{\rm d}h {\rm d}k,\\
  \end{eqnarray*}
As $\displaystyle \int_K P_{p,\lambda}(ha_t,k) \boldsymbol \ib_q^pP_\delta\delta(k^{-1}) {\rm d}k= d_{p,q}^{-1}\Phi_{\lambda,\delta}(ha_t)$, we obtain
  \begin{eqnarray*}
  c_{j}^\delta(g_t) &=& d_{p,q}^{-2}|c_q(\lambda,p)|^{-2}  {\rm e}^{2(\rho-\Re(i\lambda))t}
 \frac{d_\delta}{d_{\sigma_q}} \sum_{\delta',\ell} a^{\delta'}_{\ell}\int_K \langle  \Phi_{\lambda,\delta'}(a_t)\delta'(h^{-1})v_\ell,   \Phi_{\lambda,\delta}(a_t)\delta(h^{-1})v_j\rangle_{V_{\tau_p}  }\, {\rm d}h,\\
  &=& d_{p,q}^{-2}|c_q(\lambda,p)|^{-2}  {\rm e}^{2(\rho-\Re(i\lambda))t}
 \frac{d_\delta}{d_{\sigma_q}} \sum_{\delta',\ell} a^{\delta'}_{\ell} 
  \int_K \langle \delta(h) \Phi_{\lambda,\delta}(a_t)^* \Phi_{\lambda,\delta'}(a_t) \delta'(h^{-1}) v_\ell,v_j\rangle_{V_{\tau_p}  } \, {\rm d}h.
    \end{eqnarray*}
By the Schur lemma, we get
  \begin{eqnarray*}
   c_{j}^\delta(g_t)  &=&d_{p,q}^{-2}|c_q(\lambda,p)|^{-2}  {\rm e}^{2(\rho-\Re(i\lambda))t}
\frac{d_\delta}{d_{\sigma_q}} \sum_{\ell} a^{\delta}_{\ell} \int_K \frac{1}{d_\delta}
  \tr\left(\Phi_{\lambda,\delta}(a_t)^* \Phi_{\lambda,\delta}(a_t)\right) \langle v_\ell,v_j\rangle_{V_\delta}\, {\rm d}h, \\
  &=&d_{p,q}^{-2}|c_q(\lambda,p)|^{-2}  {\rm e}^{2(\rho-\Re(i\lambda))t} \frac{1}{d_{\sigma_q}}  a^{\delta}_{_j}
 \|  \Phi_{\lambda,\delta}(a_t) \|^2_{\rm HS}.
 \end{eqnarray*}
From all the above computations, we conclude  that,  
 \begin{eqnarray*}
  \|g_t\|^2_{L^2(K/M,\sigma_q)}
 &=& \left({\rm e}^{2(\rho-\Re(i\lambda))t}|d_{p,q}c_q(\lambda,p)|^{-2}\right)^2 
 \sum_\delta \frac{d_{\sigma_q}}{d_\delta} \sum_{j} \frac{1}{d_{\sigma_q}^2}       |a^\delta_{j}|^2 \| \Phi_{\lambda,\delta}(a_t) \|^4_{\rm HS},
  \end{eqnarray*}
  and by Lemma \ref{key-lemma} we get
   \begin{eqnarray*}
  \lim_{t\to\infty} \|g_t\|^2_{L^2(K/M;\sigma_q)}
=  \sum_\delta \frac{d_{\sigma_q}}{d_\delta} \sum_{j} |a^\delta_{j}|^2
=  \|f\|^2_{L^2(K/M;\sigma_q)}.
  \end{eqnarray*}
  To finish the proof, we have to justify that we can reverse $\lim_{t\to\infty}$ and $\sum_\delta$ by proving  that the series
  $$
  \sum_{\delta \in \widehat K(\sigma_q)}\frac{1}{d_\delta  } \sum_{j=1}^{d_\delta}        |a^\delta_{j}|^2 \| \Phi_{\lambda,\delta}(a_t) \|^4_{\rm HS},
  $$
  is uniformly convergent. This follows easily from Lemma \ref{lemma-lim-sum}.
     
\end{proof}

\section{The $L^r$-range of the Poisson transform}\label{sec6}
In this section, we will use  the $L^2$-characterization established in Theorem \ref{th-cas-L2} and the inversion formula proved in Theorem \ref{inversion} to prove  our main theorem :

\begin{theorem}\label{cas-Lr} Let  $\tau=\tau_p\in\{\tau_1,\cdots,\tau_{\frac{n-1}{2}},   \tau^\pm_{\frac{n}{2}}\}$ and $\sigma=\sigma_q\in \widehat{M}(\tau)$ with $q=p-1,p$.
 Assume $\lambda\in \mathbb C$ such that  
 \begin{equation*} 
\begin{cases}
\Re(i\lambda)>0&\text{ if $q=p$},\\ 
 \Re(i\lambda)>0 \, \text{ and } \; i\lambda\not=\rho-p+1& \text{ if $q=p-1$}. 
 \end{cases}
 \end{equation*}  
  For $1<r<\infty$, 
 the Poisson transform $\mathcal{P}_{\sigma,\lambda}^\tau$ is  a topological isomorphism
mapping  the space $L^r(K/M;\sigma)$ onto $\mathcal{E}^r_{\sigma_q,\lambda}(G/K ; \tau)$.
Furthermore,   there exists a positive constant $\gamma_\lambda$ such that   for every $f\in L^r(K/M;\sigma)$,
\begin{equation*}\label{esti-F}
d_{\tau,\sigma} | c_\sigma(\lambda,\tau)| \| f\|_{ L^r (K/M;\,\sigma)}\leqslant
 \|\mathcal{P}_{\sigma,\lambda}^\tau f\|_{r, \lambda } \leqslant  d_{\tau,\sigma}\,\gamma_\lambda \| f\|_{ L^r (K/M;\,\sigma)},
\end{equation*}
 where $ | c_\sigma(\lambda,\tau)|$ are  given explicitly in Proposition \ref{c-explicit} and Proposition \ref{c-explicit-special}.
\end{theorem}
 
\begin{proof} We will present the proof specifically for the generic case; similar arguments apply to the special cases, which we leave to the reader.

The necessary condition follows from  Theorem \ref{gaillard},  Corollary \ref{bijective1}, Corollary \ref{cor-olbrich}  and Proposition \ref{nec-cdt}. 
For the  sufficiency condition, let $F\in \mathcal{E}^{r}_{\sigma_q,\lambda}(G/K ; \tau_p)$ and write $F(g)=\sum_i F_i(g) u_i$ where $(u_i)_i$ is an orthonormal basis of $\bigwedge^p\mathbb C^n$. Fix  $(\chi_m)_m$ to be an approximation of the identity   in $C^\infty(K)$ and let 
$F_{i,m}(g)=\int_K\chi_m(k)F_i(k^{-1}g){\rm d}k$. Then $(F_{i,m})_m$ converges point-wise to $F_i$. Define $F_m : G\to \bigwedge^p\mathbb C^n$ by $F_m(g)=\sum_i F_{i,m}(g) u_i$. Then
\begin{eqnarray*}
F_m(g)&=&\sum_i \left(\int_K\chi_m(k)F_{i}(k^{-1}g){\rm d}k\right) u_i,\\
&=& \int_K \chi_m(k) \sum_i F_{i}(k^{-1}g) u_i {\rm d}k,\\
&=& \int_K \chi_m(k) F(k^{-1} g) {\rm d}k.\\
\end{eqnarray*}
We have $\|F_m(g) -F(g)\|^2_{\bigwedge^p\mathbb C^n}  {\to 0}$ as  ${m\to\infty}$,     
then $F_m\in 
\mathcal{E}_{\sigma_q,\lambda}(G/K ; \tau_p)$   for every $m$.
Further,
\begin{eqnarray*}
 F_m(ka_t)&=&\int_K \chi_m(h) F(h^{-1} ka_t) {\rm d}h,\\
&=&\left( \chi_m\ast F^t\right)(k),
\end{eqnarray*}
where $F^t : K\to \bigwedge^p\mathbb C^n$ is defined for any $t>0$ by $F^t(g) =F(ga_t)$. Since 
$$ \| (\chi_m\ast F^t)(k)\|_{\bigwedge^p\mathbb C^n}
\leqslant \int_K
 |\chi_m(h) |  \| F^t(h^{-1}k)\|_{\bigwedge^p\mathbb C^n}   {\rm d}h,$$
it follows 
$$  \| F_m^t(k)\|_{\bigwedge^p\mathbb C^n} \leqslant  \left(|\chi_m (\cdot)|  \ast \| F^t(\cdot)\|_{\bigwedge^p\mathbb C^n}\right)(k).$$
Therefore 
\begin{eqnarray*}
 \| F_m^t\|_{L^r(K;\bigwedge^p\mathbb C^n)}
 &\leqslant& \left\| |\chi_m (\cdot)|  \ast \| F^t(\cdot)\|_{\bigwedge^p\mathbb C^n}\right\|_{L^r(K)}.
 \end{eqnarray*}
Applying Young's involution inequality, we obtain  
 \begin{eqnarray}\label{holder1}
 \| F_m^t\|_{L^r(K; \bigwedge^p\mathbb C^n)} &\leqslant& 
 \|\chi_m\|_{L^1(K)} \; \left\| \| F^t(\cdot)\|_{\bigwedge^p\mathbb C^n}\right\|_{L^r(K)} \nonumber,\\
 &=& 
  \|F^t\|_{L^r(K; \bigwedge^p\mathbb C^n)},
 \end{eqnarray}
 and
 \begin{eqnarray}\label{holder2}
 \| F_m^t\|_{L^2(K; \bigwedge^p\mathbb C^n)} &\leqslant& \|\chi_m\|_{L^2(K)} \; \left\| \| F^t(\cdot)\|_{\bigwedge^p\mathbb C^n}\right\|_{L^1(K)},\nonumber\\
 &=& \|\chi_m\|_{L^2(K)} \;  \| F^t\|_{L^r(K;{\bigwedge^p\mathbb C^n})}.
 \end{eqnarray}
The inequality \eqref{holder2}  implies 
$$\sup_{t>0} {\rm e}^{(\rho-\Re(i\lambda))t} \left(\int_K \|F_m(ka_t)\|_{\bigwedge^p\mathbb C^n}^2 \right)^{1/2}\leqslant  \|\chi_m\|_{L^2(K)}   \| F\|_{ {r,\lambda}}<\infty.$$
Hence, for each $m$,  $F_m\in \mathcal{E}^{2}_{\sigma_q,\lambda}(G/K ; \tau_p)$ and from Theorem \ref{th-cas-L2} it follows that there exists $f_m\in L^2(K/M;\sigma_q)$ such that $F_m=\mathcal{P}_{q,\lambda}^p f_m$. 
Now, we need to prove that $f_m\in L^r(K/M;\sigma_q)$.  
According to Theorem \ref{inversion} we have, for any $\varphi\in C^\infty(K/M;\sigma_q)$,
$$\int_K \langle f_m(k),\varphi(k)\rangle_{\bigwedge^q\mathbb C^{n-1}}{\rm d}k=\lim_{t\to \infty} 
\int_K \langle g_m^t(k), \varphi(k)\rangle_{\bigwedge^q\mathbb C^{n-1}} {\rm d}k,$$
where
$$g_m^t(k):=d_{p,q}^{-2} |c_q(\lambda,p)|^{-2} e^{2(\rho-\Re(i\lambda))t} \boldsymbol\pi^q_p\int_K P_{p,\lambda}(ha_t,k)^* F_m(ha_t){\rm d}h.$$
Further,
 \begin{eqnarray*}
 &&\int_K \langle g_m^t(k),\varphi(k)\rangle_{\bigwedge^q\mathbb C^{n-1}} {\rm d} k\\
&&= d_{p,q}^{-2} |c_q(\lambda,p)|^{-2} e^{2(\rho-\Re(i\lambda))t} \int_K \langle \boldsymbol \pi^q_p\int_K P_{p,\lambda}(ha_t,k)^* F_m(ha_t){\rm d}h,\varphi(k)\rangle_{\bigwedge^q\mathbb C^{n-1}} {\rm d}k,\\
&&=d_{p,q}^{-2} |c_q(\lambda,p)|^{-2} e^{2(\rho-\Re(i\lambda))t}  \int_K\int_K \langle F_m(ha_t), P_{p,\lambda}(ha_t,k)\boldsymbol  \ib^p_q \varphi(k) {\rm d}k\rangle_{\bigwedge^p\mathbb C^n} {\rm d}h, \\
&&= d_{p,q}^{-3}  |c_q(\lambda,p)|^{-2} e^{2(\rho-\Re(i\lambda))t} \int_K \langle F_m(ha_t), (\mathcal{P}_{q,\lambda}^p \varphi)(ha_t) \rangle_{\bigwedge^p\mathbb C^n} {\rm d}h.
 \end{eqnarray*}
It follows  then that  
 \begin{eqnarray*}
 &&\left|\int_K \langle g_m^t(k),\varphi(k)\rangle_{\bigwedge^q\mathbb C^{n-1}}{\rm d}k\right| \\
 && \leqslant d_{p,q}^{-3}  |c_q(\lambda,p)|^{-2} e^{2(\rho-\Re(i\lambda))t}  \int_K \|F_m(ha_t)\|_{\bigwedge^p\mathbb C^n} \|\mathcal{P}_{q,\lambda}^p\varphi (ha_t)\|_{\bigwedge^p\mathbb C^n} {\rm d}h.
 \end{eqnarray*}
By H\"older's inequality (with $\frac{1}{r}+\frac{1}{s}=1$), we deduce 
 \begin{eqnarray*}
  \left|\int_K \langle g_m^t(k),\varphi(k)\rangle_{\bigwedge^q\mathbb C^{n-1}}{\rm d}k\right|
  \leqslant d_{p,q}^{-3}  |c_q(\lambda,p)|^{-2} e^{2(\rho-\Re(i\lambda))t} \|F_m^t\|_{L^r(K;\bigwedge^p\mathbb C^{n})} \| (\mathcal{P}_{q,\lambda}^p\varphi)^t\|_{L^s(K;\bigwedge^p\mathbb C^n)},
 \end{eqnarray*}
 where $(\mathcal{P}_{q,\lambda}^p\varphi)^t(k)=(\mathcal{P}_{q,\lambda}^p\varphi)(ka_t)$.
Using  \autoref{holder1}  and Proposition \ref{pro-gamma-lambda} we get
 \begin{eqnarray*}\label{eq-functional1}
 \left|\int_K \langle f_m(k),\varphi(k)\rangle_{\bigwedge^q\mathbb C^{n-1}}{\rm d}k\right| 
 &\leqslant& 
 \gamma_\lambda d_{p,q}^{-2}  |c_q(\lambda,p)|^{-2}   \|F_m\|_{r,\lambda} \|  \varphi\|_{L^s(K/M;\,\sigma_q)},\\
 &\leqslant& 
 \gamma_\lambda d_{p,q}^{-2} |c_q(\lambda,p)|^{-2}   \|F\|_{{r,\lambda}} \|  \varphi\|_{L^s(K/M; \,\sigma_q)}.
 \end{eqnarray*}
By taking the supremum over all $\varphi\in C^\infty(K/M;\sigma_q)$ with $\|  \varphi\|_{L^s(K/M;\sigma_q)}=1$ we obtain
$$\|f_m\|_{L^r(K/M;\, \sigma_q)} \leqslant \gamma_\lambda d_{p,q}^{-2}  |c_q(\lambda,p)|^{-2}  \|F\|_{{r,\lambda}} , $$
which implies $f_m$, initially belongs to  $L^2(K/M;\sigma_q)$, is in fact in   $L^r(K/M;\sigma_q)$.

For every $m$, define the linear form $T_m$ on $L^s(K/M;\sigma_q )$ by
$$T_m(\varphi)=\int_K\langle f_m(k),\varphi(k) \rangle_{\bigwedge^q\mathbb C^{n-1}} {\rm d}k.$$
Clearly, $T_m$ is continuous and 
 \begin{eqnarray*}
 |T_m(\varphi)| 
  &\leqslant& \gamma_\lambda d_{p,q}^{-2}  |c_q(\lambda,p)|^{-2}   \|F\|_{r,\lambda} \|  \varphi\|_{L^s(K/M;\, \sigma_q)}.
  \end{eqnarray*}
This shows that  $(T_m)_m$ is uniformly bounded in $L^s(K/M;\sigma_q)$, with
$$\sup_m \|T_m\|_{\rm{op}} \leqslant \gamma_\lambda d_{p,q}^{-2}  |c_q(\lambda,p)|^{-2}   \|F\|_{{r,\lambda}}\;. $$
The Banach-Alaouglu-Bourbaki theorem will then ensures the existence of a subsequence of bounded operators $(T_{m_j})$ which converges to a bounded operator $T$ under the weak-$\ast$ topology, with
$$  \|T\|_{\rm{op}} \leqslant \gamma_\lambda d_{p,q}^{-2}  |c_q(\lambda,p)|^{-2}   \|F\|_{{r,\lambda}}\;.$$
Thus,   Riesz's representation theorem guarantees the existence of  a unique $f\in L^r(K/M;\sigma_q)$ such that
$$T(\varphi) =\int_K \langle   \varphi(k), f(k) \rangle_{\bigwedge^q\mathbb C^{n-1}} {\rm d}k.$$
We consider  the test function $\varphi_g(k)= P_{p,\lambda}(g,k) v$ with $v\in\bigwedge^p\mathbb C^n$ and where $P_{p, \lambda}(g, k)=\mathrm{e}^{-(i \lambda+\rho) H\left(g^{-1} k\right)} \tau_p\left(\kappa\left(g^{-1} k\right)\right)$
 is the Poisson kernel.  Then 
$$T(\varphi_g) =\langle v, \mathcal{P}_{q,\lambda}^pf(g)\rangle_{\bigwedge^p\mathbb C^{n}}.$$ On the other hand 
$$T_{m_j}(\varphi_g) =\langle v, \mathcal{P}_{q,\lambda}^pf_{m_j}(g)\rangle_{\bigwedge^p\mathbb C^{n}} = \langle v, F_{m_j}(g)\rangle_{\bigwedge^p\mathbb C^{n}}.$$ 
Taking the limit of the above identity when $j\to\infty$ we conclude that   
$F(g)=\mathcal{P}_{q,\lambda}^p f (g)$ for every $g\in G$.
\end{proof}

As an immediate consequence of Theorem \ref{cas-Lr}  we obtain the following  characterization of co-closed    harmonic $p$-forms on $H^n(\mathbb R)$:

\begin{corollary}\label{corr-harm}  Let $p$ be an integer with $0\leqslant p<(n-1)/2$. 
   For $1<r<\infty$, the Poisson transform $\mathcal P_{p,i(p-\rho)}^p$ is a topological isomorphism   from the space $L^r(K/M; \sigma_p)$ onto the space $\mathcal{E}^r_{p,i(p-\rho)}(G/K ; \tau_p)$.  Moreover, for every $f\in L^r (K/M;\,\sigma_p)$ the following estimates hold,
\begin{align*}
\frac{2(\rho-p)}{2\rho-p} c_p(\rho) \| f\|_{ L^r (K/M;\,\sigma_p)}\leqslant
 \|\mathcal{P}_{p,i(p-\rho)}^p f\|_{{r,i(p-\rho)}} \leqslant  c_p(\rho) \| f\|_{ L^r (K/M;\,\sigma_p)},
\end{align*}
where 
$$
c_p(\rho)=d_{p,p}\frac{2^p\Gamma(\rho+\frac{1}{2})\Gamma(\rho-p)}{\Gamma(\rho-\frac{p}{2})\Gamma(\rho-\frac{p}{2}+\frac{1}{2})}.
$$   
\end{corollary}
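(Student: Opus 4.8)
The plan is to read off Corollary \ref{corr-harm} from Theorem \ref{cas-Lr} by specializing to $q=p$ and $\lambda=i(p-\rho)$, and then to evaluate the two constants in the resulting estimate. First I would check admissibility of this parameter: since $0\le p<(n-1)/2=\rho$ we have $i\lambda=\rho-p>0$, hence $\Re(i\lambda)>0$, which is precisely the hypothesis of Theorem \ref{cas-Lr} in the case $q=p$. For this $\lambda$ one has $\lambda^2=-(\rho-p)^2$, so the $\Delta$-eigenvalue $\lambda^2+(\rho-p)^2$ vanishes; thus $\mathcal E^r_{p,i(p-\rho)}(G/K;\tau_p)$ is the Hardy-type space of co-closed harmonic $p$-forms, which is the object named in the corollary. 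Theorem \ref{cas-Lr} then gives, verbatim, that $\mathcal P^p_{p,i(p-\rho)}$ is a topological isomorphism of $L^r(K/M;\sigma_p)$ onto $\mathcal E^r_{p,i(p-\rho)}(G/K;\tau_p)$, together with
\[
c_{p,p}\,|c_p(i(p-\rho),p)|\,\|f\|_{L^r(K/M;\sigma_p)}\le \|\mathcal P^p_{p,i(p-\rho)}f\|_{\mathcal E^r_{p,i(p-\rho)}}\le c_{p,p}\,\gamma_\lambda\,\|f\|_{L^r(K/M;\sigma_p)},
\]
and it remains only to compute $c_p(i(p-\rho),p)$ and $\gamma_\lambda$.

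For the lower constant I would invoke Proposition \ref{c-explicit}: at $i\lambda=\rho_p=\rho-p$ one has $\frac{i\lambda+\rho_p}{i\lambda+\rho}=\frac{2(\rho-p)}{2\rho-p}$, while $\frac{i\lambda+\rho}{2}=\rho-\frac p2$ and $\frac{i\lambda+\rho+1}{2}=\rho-\frac p2+\frac12$ turn $c(\lambda)$ into $2^{p}\Gamma(\rho-p)\Gamma(\rho+\frac12)/\bigl(\Gamma(\rho-\frac p2)\Gamma(\rho-\frac p2+\frac12)\bigr)$. Since $0<\rho-p<\rho$ and $\rho\ge\frac12$, all these $\Gamma$-arguments are positive, so the factors are positive reals and the modulus is harmless; one gets $c_{p,p}\,|c_p(i(p-\rho),p)|=\frac{2(\rho-p)}{2\rho-p}\,c_p(\rho)$ with $c_p(\rho)$ as stated. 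For the upper constant I would return to the proof of Proposition \ref{pro-gamma-lambda}, where $\|e_{\lambda,t}\|_{L^1(K/M;\sigma_q)}=\phi^{(\rho-\frac12,-\frac12)}_{-i\Re(i\lambda)}(t)$ and the asymptotics \eqref{Jacobi} identify $\gamma_\lambda$ with $c_{\rho-\frac12,-\frac12}(-i\Re(i\lambda))$. A glance at \eqref{simple} shows that the function $c_{\rho-\frac12,-\frac12}$ coincides with the function $c$ of Proposition \ref{c-explicit}, so at $\Re(i\lambda)=\rho-p$ it again equals $2^{p}\Gamma(\rho-p)\Gamma(\rho+\frac12)/\bigl(\Gamma(\rho-\frac p2)\Gamma(\rho-\frac p2+\frac12)\bigr)$; hence $c_{p,p}\,\gamma_\lambda=c_p(\rho)$. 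Substituting the two values into the displayed estimate produces exactly the inequalities of the corollary.

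The only step needing real care --- the ``main obstacle'' --- is justifying that $\gamma_\lambda$ in \eqref{E1} can indeed be taken to equal $c_{\rho-\frac12,-\frac12}(-i\Re(i\lambda))$, and not merely the a priori larger quantity $\sup_{t>0}e^{(\rho-\Re(i\lambda))t}\phi^{(\rho-\frac12,-\frac12)}_{-i\Re(i\lambda)}(t)$; here one uses the elementary fact that in the harmonic range $0<\Re(i\lambda)=\rho-p<\rho$ the normalized Jacobi function $t\mapsto e^{(\rho-\Re(i\lambda))t}\phi^{(\rho-\frac12,-\frac12)}_{-i\Re(i\lambda)}(t)$ increases to its limit, hence is bounded by it. Everything else is bookkeeping --- in particular keeping the normalization factor $c_{p,p}=\sqrt{n/(n-p)}$ attached consistently so that it ends up inside $c_p(\rho)$ exactly once --- and needs no input beyond Theorem \ref{cas-Lr}, Proposition \ref{c-explicit}, and the Jacobi-function estimates already used in Proposition \ref{pro-gamma-lambda}.
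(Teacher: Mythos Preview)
Your proposal is correct and follows exactly the route the paper intends: the corollary is stated as ``an immediate consequence of Theorem~\ref{cas-Lr}'', and you carry out precisely that specialization together with the explicit evaluation of the constants via Proposition~\ref{c-explicit}. You even go further than the paper by isolating and justifying the one point it glosses over, namely that $\gamma_\lambda$ can be taken equal to $c_{\rho-\frac12,-\frac12}(-i(\rho-p))$; your monotonicity claim is indeed true and follows from the Euler transformation ${}_2F_1(a,b;c;z)=(1-z)^{-b}{}_2F_1(c-a,b;c;z/(z-1))$, which yields $e^{(\rho-s)t}\phi_{-is}^{(\rho-\frac12,-\frac12)}(t)=\bigl(\tfrac{e^t}{\cosh t}\bigr)^{\rho-s}{}_2F_1\bigl(\tfrac{\rho+1-s}{2},\tfrac{\rho-s}{2};\rho+\tfrac12;\tanh^2 t\bigr)$, a product of two increasing positive factors for $0<s\le\rho$.
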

In the case where $p=0$, we recover the classical fact that the Poisson transform is an isometric isomorphism from $L^r(\partial H^n(\mathbb R))$ onto the Hardy-harmonic space on $H^n(\mathbb R)$ (see \cite{stoll}).

 \pdfbookmark[1]{References}{ref}

\end{document}